\documentclass[a4paper, 11pt]{article}
\usepackage[a4paper,hmargin={2.54cm,2.54cm},vmargin={3.17cm,3.17cm}]{geometry}

\usepackage{amsmath,amssymb}
\usepackage{amsthm}
\usepackage{amssymb}

\newcommand{\sgn}{\mathrm{sgn}}
\newcommand{\const}{\mathrm{const}}
\newcommand{\inters}{\mathrm{int}}
\newcommand{\Tr}{\mathrm{Tr}}
\newcommand{\Arg}{\mathrm{Arg}}
\newcommand{\definition}{\mathrm{def}}

\renewcommand{\Im}{\mathrm{Im}}
\renewcommand{\Re}{\mathrm{Re}}

\newtheorem{lemma}{Lemma}
\newtheorem{theorem}{Theorem}

\numberwithin{equation}{section}
\numberwithin{lemma}{section}
\numberwithin{theorem}{section}
\numberwithin{statement}{section}

\begin{document}

\begin{center}
\Large \textbf{ A large time asymptotics for the solution of the Cauchy problem for the Novikov-Veselov equation at negative energy with non-singular scattering data }
\end{center}

\begin{center}
A.V. Kazeykina \footnote{CMAP, Ecole Polytechnique, Palaiseau, 91128, France; email: kazeykina@cmap.polytechnique.fr}
\end{center}

\textbf{Abstract.} In the present paper we are concerned with the Novikov--Veselov equation at negative energy, i.e. with the $ ( 2 + 1 ) $--dimensional analog of the KdV equation integrable by the method of inverse scattering for the two--dimensional Schr\"odinger equation at negative energy. We show that the solution of the Cauchy problem for this equation with non--singular scattering data behaves asymptotically as $ \frac{ \const }{ t^{ 3/4 } } $ in the uniform norm at large times $ t $. We also prove that this asymptotics is optimal.

\section{Introduction}
In the present paper we consider the Novikov--Veselov equation
\begin{gather}
\notag
\partial_t v = 4 \Re ( 4 \partial_z^3 v + \partial_z( v w ) - E \partial_z w ),\\
\label{NV}
\partial_{ \bar z } w = - 3 \partial_z v, \quad v = \bar v, \quad E \in \mathbb{R}, \quad E < 0, \\
\notag
v = v( x, t ), \quad w = w( x, t ), \quad x = ( x_1, x_2 ) \in \mathbb{R}^2, \quad t \in \mathbb{R},
\end{gather}
where
\begin{equation*}
\partial_t = \frac{ \partial }{ \partial t }, \quad \partial_z = \frac{ 1 }{ 2 } \left( \frac{ \partial }{ \partial x_1 } - i \frac{ \partial }{ \partial x_2 } \right), \quad \partial_{ \bar z } = \frac{ 1 }{ 2 } \left( \frac{ \partial }{ \partial x_1 } + i \frac{ \partial }{ \partial x_2 } \right).
\end{equation*}
We will say that $ ( v, w ) $ is a rapidly decaying solution of (\ref{NV}) if
\begin{subequations}
 \label{sol_definition}
\begin{align}
 & \bullet v, w \in C( \mathbb{R}^2 \times \mathbb{R} ), \quad v( \cdot, t ) \in C^3( \mathbb{R}^3 ), \\
 & \bullet | \partial_{ x }^{ j } v( x, t ) | \leqslant \frac{ q( t ) }{ ( 1 + | x | )^{ 2 + \varepsilon } }, \; | j | \leqslant 3, \text{ for some $ \varepsilon > 0 $,} \quad w( x, t ) \to 0, | x | \to \infty, \\
 & \bullet ( v, w ) \text{ satisfies (\ref{NV}).}
\end{align}

\end{subequations}

Note that if $ v( x, t ) = v( x_1, t ) $, $ w( x, t ) = w( x_1, t ) $, then (\ref{NV}) is reduced to the classic KdV equation. In addition, (\ref{NV}) is integrable via the inverse scattering method for the two--dimensional Schr\"odinger equation
\begin{equation}
\label{schrodinger}
 L \psi = E \psi, \quad L = - \Delta + v( x ), \quad x = ( x_1, x_2 ), \quad E = E_{ fixed }.
\end{equation}
In this connection, it was shown (see \cite{M}, \cite{NV1}, \cite{NV2}) that for the Schr\"odinger operator $ L $ from (\ref{schrodinger}) there exist appropriate operators $ A $, $ B $ (Manakov L--A--B triple) such that (\ref{NV}) is equivalent to
\begin{equation*}
 \frac{ \partial ( L - E ) }{ \partial t } = [ L - E, A ] + B ( L - E ),
\end{equation*}
where $ [ \cdot, \cdot ] $ is the commutator.

Note that both Kadomtsev--Petviashvili equations can be obtained from (\ref{NV}) by considering an appropriate limit $ E \to \pm \infty $ (see \cite{ZS}, \cite{G2}).

We will consider the Cauchy problem for equation (\ref{NV}) with the initial data
\begin{equation}
\label{init_value}
 v( x, 0 ) = v_0( x ), \quad w( x, 0 ) = w_0( x ).
\end{equation}
We will assume that the function $ v_0( x ) $ satisfies the following conditions
\begin{subequations}
\label{v_init_conditions}
\begin{align}
  \label{real}
  & \bullet v_0 = \bar v_0, \\
  \label{empty_E}
  & \begin{aligned}
  \bullet \; & \mathcal{E} = \emptyset, \text{ where $ \mathcal{E} $ is the set of zeros of the Fredholm determinant $ \Delta $ of (\ref{fr_det})} \\
    & \text{for equation (\ref{fr_equation}) with $ v( x ) = v_0( x ) $},
    \end{aligned}\\
  \label{reg_decay}
  & \bullet v_0 \in \mathcal{S}( \mathbb{R}^2 ), \text{ where $ \mathcal{S} $ denotes the Schwartz class. }
\end{align}
\end{subequations}
As for the function $ w_0( x ) $, which plays an auxiliary role, we will assume that it is a continuous function decaying at infinity and determined using $ \partial_{ \bar z } w_0( x ) = -3 \partial_{ z } v_0( x ) $ from (\ref{NV}).

Condition (\ref{empty_E}) is equivalent to non--singularity of scattering data for $ v_0( x ) $.  Conditions (\ref{v_init_conditions}) define the class of initial values for which the direct and inverse scattering equations (\ref{mu_int_equation}), (\ref{mu_continuity})--(\ref{mu_limits}), with time dynamics given by (\ref{t_dynamics_b}), are everywhere solvable and the corresponding solution $ v $ of (\ref{NV}) belongs to $ C^{ \infty }( \mathbb{R}^2, \mathbb{R} ) $. We will call such solution $ ( v( x, t ), w( x, t )  )$, constructed from $ ( v_0( x ), w_0( x ) ) $ via the inverse scattering method, an ``inverse scattering solution'' of (\ref{NV}).

The main result of this paper consists in the following: we show that for the ``inverse scattering solution'' $ v( x, t ) $ of (\ref{NV}), (\ref{init_value}), where $ E < 0 $ and $ v( x, 0 ) = v_0( x ) $ satisfies (\ref{v_init_conditions}), the following estimate holds
\begin{equation}
\label{main_estimate}
 | v( x, t ) | \leqslant \frac{ \const( v_0 ) \ln( 3 + | t | ) }{ ( 1 + | t | )^{ 3/4 } }, \quad t \in \mathbb{R}, \text{ uniformly on } x \in \mathbb{R}^2.
\end{equation}
We show that this estimate is optimal in the sense that for some initial values $ v( x, 0 ) $ and for some lines $ x = \omega t $, $ \omega \in \mathbb{S}^1 $, the exact asymptotics of $ v( x, t ) $ along these lines is $ \frac{ \const }{ ( 1 + | t | )^{ 3/4 } } $ as $ | t | \to \infty $ (where the constant is nonzero). Note that de facto the ``inverse scattering solution'' is the rapidly decaying solution in the sense of (\ref{sol_definition}).

This work is a continuation of the studies on the large time asymptotic behavior of the solution of the Cauchy problem for the Novikov--Veselov equation started in \cite{KN1} for the case of positive energy $ E $. It was shown in \cite{KN1} that if the initial data $ ( v_0( x ) $, $ w_0( x ) ) $ satisfy the following conditions:
\begin{itemize}
\item $ ( v_0( x ) $, $ w_0( x ) ) $  are sufficiently regular and decaying at $ | x | \to \infty $,
\item $ v_0( x ) $ is transparent for (\ref{schrodinger}) at $ E = E_{ fixed } > 0 $, i.e. its scattering amplitude $ f $ is identically zero at fixed energy,
\item the additional ``scattering data'' $ b $ for $ v_0( x ) $ is non--singular,
\end{itemize}
then the corresponding solution of (\ref{NV}),(\ref{init_value}) can be estimated as
\begin{equation*}
 | v( x, t ) | \leqslant \frac{ \const \cdot \ln( 3 + | t | ) }{ 1 + | t | }, \quad t \in \mathbb{R} \text{ uniformly on } x \in \mathbb{R}^2.
\end{equation*}
This estimate implies, in particular, that there are no localized soliton--type traveling waves in the asymptotics of (\ref{NV}) with the``transparent'' at $ E = E_{ fixed } > 0 $ Cauchy data from the aforementioned class, in contrast with the large time asymptotics for solutions of the KdV equation with reflectionless initial data.

It was shown in \cite{N2} that all soliton--type (traveling wave) solutions of (\ref{NV}) with $ E > 0 $ must have a zero scattering amplitude at fixed energy; in addition it was proved in \cite{N2} that for the equation (\ref{NV}) with $ E > 0 $ no exponentially localized soliton--type solutions exist (even if the scattering data are allowed to have singularities). However, in \cite{G1}, \cite{G2} a family of algebraically localized solitons (traveling waves) was constructed de facto (see also \cite{KN2}). We note that for the case $ E < 0 $, though the absence of exponentially--localized solitons has been proved (see \cite{KN3}), the existence of bounded algebraically localized solitons is still an open question.

Note that studies on the large time asymptotics for solutions of the Cauchy problem for the Kadomtsev--Petviashvili equations were fulfilled in \cite{MST}, \cite{HNS}, \cite{K}.

The proofs provided in the present paper are based on the scheme developed in \cite{KN1}, the stationary phase method (see \cite{Fe}). These proofs include, in particular, an analysis of some cubic algebraic equation depending on a complex parameter.

This work was fulfilled in the framework of research carried out under the supervision of R.G. Novikov.

\section{Inverse ``scattering'' transform for the two--dimensional Schr\"o\-dinger equation at a fixed negative energy}
\label{transparent_potentials}
In this section we give a brief description of the inverse ``scattering'' transform for the two-dimensional Schr\"odinger equation (\ref{schrodinger}) at a fixed negative energy $ E $ (see \cite{GN}, \cite{N1}, \cite{G2}).

First of all, we note that by scaling transform we can reduce the scattering problem with an arbitrary fixed negative energy to the case when $ E = -1 $. Therefore, in our further reasoning we will assume that $ E = -1 $.

Let us consider potentials $ v( x ) $ for the problem (\ref{schrodinger}) satisfying the following conditions
\begin{equation}
 \label{weak_conditions}
  v = \bar v, \quad | v( x ) | \leqslant \frac{ q }{ ( 1 + | x | )^{ 2 + \varepsilon } }, \quad x \in \mathbb{R}^2,
\end{equation}
for some fixed $ q $ and $ \varepsilon > 0 $. Then it is known that for $ \lambda \in \mathbb{C} \backslash ( 0 \cup \mathcal{E} ) $, where
\begin{equation}
\label{e_set}
\begin{aligned}
& \mathcal{E} \text{ is the set of zeros of the modified Fredholm determinant } \Delta \\
& \text{ for equation (\ref{fr_equation}),}
\end{aligned}
\end{equation}
there exists a unique continuous solution $ \psi( z, \lambda ) $ of (\ref{schrodinger}) with the following asymptotics
\begin{equation}
\label{psi_mu}
\psi( z, \lambda ) = e^{ -\frac{ 1 }{ 2 }( \lambda \bar z + z / \lambda ) } \mu( z, \lambda ), \quad \mu( z, \lambda ) = 1 + o( 1 ), \quad | z | \to \infty.
\end{equation}
Here the notation $ z = x_1 + i x_2 $ is used. These solutions are known as the Faddeev solutions for the Schrodinger equation (\ref{schrodinger}), $ E = - 1 $, see for example \cite{Fa}, \cite{N1}.

The function $ \mu( z, \lambda ) $ satisfies the following integral equation
\begin{align}
\label{mu_int_equation}
& \mu( z, \lambda ) = 1 + \iint\limits_{ \zeta \in \mathbb{C} } g( z - \zeta, \lambda ) v( \zeta ) \mu( \zeta, \lambda ) d \Re \zeta d \Im \zeta \\
\label{green}
& g( z, \lambda ) = - \left( \frac{ 1 }{ 2 \pi } \right)^2 \iint\limits_{ \zeta \in \mathbb{C} } \frac{ \exp( i / 2 ( \zeta \bar z + \bar \zeta z ) ) }{ \zeta \bar \zeta + i ( \lambda \bar \zeta + \zeta / \lambda ) } d \Re \zeta d \Im \zeta,
\end{align}
where $ z \in \mathbb{C} $, $ \lambda \in \mathbb{C} \backslash 0 $.

In terms of  $ m( z, \lambda ) = ( 1 + | z | )^{ -( 2 + \varepsilon ) / 2 } \mu( z, \lambda ) $ equation (\ref{mu_int_equation}) takes the form
\begin{equation}
\label{fr_equation}
m( z, \lambda ) = ( 1 + | z | )^{ -( 2 + \varepsilon ) / 2 } + \iint\limits_{ \zeta \in \mathbb{C} } ( 1 + | z | )^{ -( 2 + \varepsilon ) / 2 } g( z - \zeta, \lambda ) \frac{ v( \zeta ) }{ ( 1 + | \zeta | )^{ -( 2 + \varepsilon ) / 2 } } m( \zeta, \lambda ) d \Re \zeta d \Im \zeta,
\end{equation}
where $ z \in \mathbb{C} $, $ \lambda \in \mathbb{C} \backslash 0 $. In addition, $ A( \cdot, \cdot, \lambda ) \in L^2( \mathbb{C} \times \mathbb{C} ) $, $ | \Tr A^2( \lambda ) | < \infty $, where $ A( z, \zeta, \lambda ) $ is the Schwartz kernel of the integral operator $ A( \lambda ) $ of the integral equation (\ref{fr_equation}). Thus, the modified Fredholm determinant for (\ref{fr_equation}) can be defined by means of the formula:
\begin{equation}
\label{fr_det}
\ln \Delta( \lambda ) = \Tr( \ln( I - A( \lambda ) ) + A( \lambda ) )
\end{equation}
(see \cite{GK} for more precise sense of such definition).

Taking the subsequent members in the asymptotic expansion (\ref{psi_mu}) for $ \psi( z, \lambda ) $, we obtain (see \cite{N1}):
\begin{multline}
\label{ab_def}
\psi( z, \lambda ) = \exp\left( - \frac{ 1 }{ 2 } \left( \lambda \bar z + \frac{ z }{ \lambda } \right) \right) \Biggl\{ 1 - 2 \pi \sgn ( 1 - \lambda \bar \lambda ) \times \\
\times \left( \frac{ i \lambda a( \lambda ) }{ z - \lambda^2 \bar z } + \exp\left( - \frac{ 1 }{ 2 } \left( \left( \frac{ 1 }{ \bar \lambda } - \lambda \right) \bar z + \left( \frac{ 1 }{ \lambda } - \bar \lambda \right) z \right) \right) \frac{ \bar \lambda b( \lambda ) }{ i ( \bar \lambda^2 z - \bar z ) } \right) + o\left( \frac{ 1 }{ | z | } \right)\Biggl\},
\end{multline}
$ | z | \to \infty $, $ \lambda \in \mathbb{C} \backslash ( \mathcal{E} \cup 0 ) $.

The functions $ a( \lambda ) $, $ b( \lambda ) $ from (\ref{ab_def}) are called the ``scattering'' data for the problem (\ref{schrodinger}), (\ref{weak_conditions}) with $ E = - 1 $.

The function $ \mu( z, \lambda ) $, defined by (\ref{mu_int_equation}), satisfies the following properties:
\begin{equation}
\label{mu_continuity}
\mu( z, \lambda ) \text{ is a continuous function of } \lambda \text{ on } \mathbb{C} \backslash ( 0 \cup \mathcal{E} );
\end{equation}

\begin{subequations}
\label{dbar}
\begin{align}
\label{mu_dbar}
& \frac{ \partial \mu( z, \lambda ) }{ \partial \bar \lambda } = r( z, \lambda ) \overline{ \mu( z, \lambda ) }, \\
\label{r}
& r( z, \lambda ) = r( \lambda ) \exp\left( \frac{ 1 }{ 2 } \left( \left( \lambda - \frac{ 1 }{ \bar \lambda } \right) \bar z - \left( \bar \lambda - \frac{ 1 }{ \lambda } \right) z \right) \right), \\
\label{t}
& r( \lambda ) = \frac{ \pi \sgn( 1 - \lambda \bar \lambda ) }{ \bar \lambda } b( \lambda )
\end{align}
\end{subequations}
for $ \lambda \in \mathbb{C} \backslash ( 0 \cup \mathcal{E} ) $;

\begin{equation}
\label{mu_limits}
\mu \to 1, \text{ as } \lambda \to \infty, \; \lambda \to 0.
\end{equation}

The function $ b( \lambda ) $ possesses the following symmetries:
\begin{equation}
\label{b_sym}
b\left( - \frac{ 1 }{ \overline{\lambda} } \right) = b( \lambda ), \quad b\left( \frac{ 1 }{ \overline{\lambda} } \right) = \overline{ b( \lambda ) }, \quad \lambda \in \mathbb{C} \backslash 0.
\end{equation}

In addition, the following theorem is valid:
\begin{theorem}[see \cite{GN}, \cite{N1}, \cite{G2}]
\label{direct_inverse}
\begin{itemize}
\rule{0pt}{0pt}
\item[(i)] Let $ v $ satisfy (\ref{v_init_conditions}). Then the scattering data $ b( \lambda ) $ for the potential $ v( z ) $ satisfy properties (\ref{b_sym}) and $ b \in \mathcal{S}( \bar D_- ) $, where $ D_- = \{ \lambda \in \mathbb{C} \colon | \lambda | > 1 \} $, $ \bar D_- = D_- \cup \partial D_- $ and $ \mathcal{S} $ denotes the Schwartz class.

\item[(ii)] Let $ b $ be a function on $ \mathbb{C} $, such that $ b \in \mathcal{S}( \bar D_- ) $ and the symmetry properties (\ref{b_sym}) hold. Then the equations of inverse scattering (\ref{mu_solve})--(\ref{v_main_formula}) are uniquely solvable and the corresponding potential $ v( z ) $ satisfies the following properties: $ v \in C^{ \infty }( \mathbb{C} ) $, $ v = \bar v $, $ | v( z ) | \to 0 $ when $ | z | \to \infty $.

\end{itemize}
\end{theorem}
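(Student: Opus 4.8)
The plan is to treat the two directions separately, in each case by the $\bar\partial$-machinery for the two--dimensional Schr\"odinger operator at negative energy.

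\emph{Direct problem (i).} First I would invoke condition (\ref{empty_E}), i.e. $\mathcal{E}=\emptyset$, to guarantee that the modified Fredholm determinant $\Delta(\lambda)$ of (\ref{fr_det}) is nonvanishing on all of $\mathbb{C}\backslash 0$, so that $I-A(\lambda)$ is invertible, the integral equation (\ref{mu_int_equation}) is uniquely solvable, and $\mu(z,\lambda)$ is well defined for every $\lambda\in\mathbb{C}\backslash 0$. Reading off the $|z|\to\infty$ expansion (\ref{ab_def}) then produces $a(\lambda)$ and $b(\lambda)$. The symmetry relations (\ref{b_sym}) I would obtain by tracking how the Faddeev Green's function $g(z,\lambda)$ of (\ref{green}) transforms under the involutions $\lambda\mapsto 1/\bar\lambda$ and $\lambda\mapsto -1/\bar\lambda$: the denominator $\zeta\bar\zeta+i(\lambda\bar\zeta+\zeta/\lambda)$ is invariant (up to conjugation) under these maps, so combining this with the reality $v=\bar v$ and the uniqueness of $\mu$ yields the corresponding symmetries of $\mu$, hence the three identities in (\ref{b_sym}).

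\emph{Regularity in (i).} For the Schwartz--class assertion I would show that $v\mapsto b$ maps $\mathcal{S}(\mathbb{R}^2)$ into $\mathcal{S}(\bar D_-)$. Smoothness of $b$ in $\lambda$ follows by differentiating (\ref{mu_int_equation}) in $\lambda$, using the explicit $\lambda$--dependence of $g$ together with the decay (\ref{weak_conditions}) inherited from $v\in\mathcal{S}(\mathbb{R}^2)$ to keep all kernels uniformly integrable. Rapid decay of $b(\lambda)$ as $|\lambda|\to\infty$ comes from repeated integration by parts against the oscillatory exponential in (\ref{ab_def}), each integration gaining a power of $|\lambda|^{-1}$ at the cost of a derivative of $v$; the Schwartz decay of $v$ makes this bootstrap unlimited.

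\emph{Inverse problem (ii).} Given $b\in\mathcal{S}(\bar D_-)$ obeying (\ref{b_sym}), I would define $r(\lambda)$ by (\ref{t}) and $r(z,\lambda)$ by (\ref{r}), and recast the $\bar\partial$--equation (\ref{mu_dbar}) with normalization (\ref{mu_limits}) as the integral equation
\begin{equation*}
\mu(z,\lambda) = 1 + \frac{1}{\pi}\iint\limits_{\zeta\in\mathbb{C}} \frac{r(z,\zeta)\,\overline{\mu(z,\zeta)}}{\lambda-\zeta}\, d\Re\zeta\, d\Im\zeta .
\end{equation*}
Because of the conjugate $\overline{\mu}$ this is only real--linear in $\mu$, so Fredholm theory applies only after I establish a vanishing lemma: the associated homogeneous equation has no nontrivial solution in the relevant class, for every $z$. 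I expect this to be the main obstacle. The route I would follow is to pass to the second (complex--linear) iterate and exploit the symmetry (\ref{b_sym}) together with the definite sign structure of $r(\lambda)=\pi\,\sgn(1-\lambda\bar\lambda)\,b(\lambda)/\bar\lambda$ across the unit circle to exclude exceptional points. Unique solvability then defines $\mu$, and $v$ is recovered via the reconstruction formula in (\ref{mu_solve})--(\ref{v_main_formula}). It remains to verify the three properties of $v$: reality $v=\bar v$ follows from the symmetries (\ref{b_sym}), which force the matching symmetry of $\mu$; smoothness $v\in C^\infty(\mathbb{C})$ follows by differentiating the reconstruction formula under the integral sign, convergence of all $z$--derivatives being ensured by $b\in\mathcal{S}(\bar D_-)$; and $|v(z)|\to 0$ as $|z|\to\infty$ follows from the oscillatory $z$--dependence in (\ref{r}) by a Riemann--Lebesgue type argument.
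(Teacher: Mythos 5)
First, a point of order: the paper does not prove Theorem \ref{direct_inverse} at all — it is imported from \cite{GN}, \cite{N1}, \cite{G2} (note the attribution in the theorem header). So there is no internal proof to compare against, and your proposal has to be judged on its own merits and against the standard arguments of those references.

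For part (i), your sketch follows the standard route: unique solvability of (\ref{mu_int_equation}) from the hypothesis $\mathcal{E}=\emptyset$ of (\ref{empty_E}), the symmetries (\ref{b_sym}) from the involution symmetries of the Faddeev Green's function combined with reality of $v$ and uniqueness of $\mu$, and rapid decay of $b$ from repeated integration by parts in the oscillatory integral. This is acceptable as a sketch, though it passes silently over why the conclusion is only $b\in\mathcal{S}(\bar D_-)$: the paper itself notes that $\partial_\lambda b$ is in general discontinuous across the unit circle $T$, so your "unlimited bootstrap" of smoothness must be formulated on the closed exterior $\bar D_-$ (with one-sided limits at $T$), not on all of $\mathbb{C}$.

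The genuine gap is in part (ii). You correctly observe that the equation is only real-linear and that Fredholm theory requires a vanishing lemma for the homogeneous equation, but you then leave that lemma — which you yourself call the main obstacle — unproven, proposing a speculative route via the "second iterate" and the "definite sign structure" of $r$ across $T$ to "exclude exceptional points." This is where the proposal fails, and it is also not the mechanism by which the result actually holds. The vanishing lemma is a classical theorem of Vekua's theory of generalized analytic functions \cite{V} (the similarity principle): any bounded solution $\nu$ of $\partial_{\bar\lambda}\nu = r(z,\lambda)\overline{\nu}$ with $\nu\to 0$ as $\lambda\to\infty$ factors as $\nu = w\,e^{\omega}$, where $\omega = \partial_{\bar\lambda}^{-1}\bigl(r\overline{\nu}/\nu\bigr)$ is bounded and continuous because $|r\overline{\nu}/\nu| = |r|$ and $r(z,\cdot)\in L^{p_1}\cap L^{p_2}(\mathbb{C})$ with $p_1<2<p_2$ uniformly in $z$ (here one uses that the exponential factor in (\ref{r}) is unimodular, and that $b\in\mathcal{S}(\bar D_-)$ together with (\ref{b_sym}) controls $r(\lambda)$ of (\ref{t}) both as $\lambda\to\infty$ and as $\lambda\to 0$), while $w$ is entire and vanishes at infinity, hence $w\equiv 0$ by Liouville. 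No sign-definiteness and no symmetry input is needed, and there are no exceptional points in the inverse problem: exceptional points are a phenomenon of the direct problem, excluded there by hypothesis (\ref{empty_E}). This is precisely what the paper invokes in Section 4, where it states that by the theory of generalized analytic functions (see \cite{V}) equations (\ref{mu_int}), (\ref{mu_an_form}) have a unique bounded solution for all $z$, $t$. Your remaining claims in (ii) — reality of $v$ from (\ref{b_sym}) plus uniqueness, smoothness by differentiation under the integral sign, decay by a Riemann--Lebesgue argument — are the right statements, but they all rest on the solvability step that your proposal leaves open.
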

Let us denote by $ T $ the unit circle on the complex plane:
\begin{equation}
\label{t_definition}
T = \{ \lambda \in \mathbb{C} \colon | \lambda | = 1 \}.
\end{equation}
Then, in addition, it is known that under the assumptions of item (i) of Theorem \ref{direct_inverse} the function $ b $ is continuous on $ \mathbb{C} $ and its derivative $ \partial_{ \lambda }b( \lambda ) $ is bounded on $ \mathbb{C} $, though discontinuous, in general, on $ T $.

Finally, if $ ( v( z, t ), w( z, t ) ) $ is a solution of equation (\ref{NV}) with $ E = -1 $ in the sense of (\ref{sol_definition}), then the dynamics of the ``scattering'' data is described by the following equations (see \cite{GN})
\begin{align}
\label{t_dynamics_a}
& a( \lambda, t ) = a( \lambda, 0 ), \\
\label{t_dynamics_b}
& b( \lambda, t ) = \exp\left\{ \left( \lambda^3 + \frac{ 1 }{ \lambda^3 } - \bar \lambda^3 - \frac{ 1 }{ \bar \lambda^3 } \right) t \right\} b( \lambda, 0 ).
\end{align}

The reconstruction of the potential $ v( z, t ) $ from these ``scattering'' data is based on the following scheme.
\begin{enumerate}
\item Function $ \mu( z, \lambda, t ) $ is constructed as the solution of the following integral equation
\begin{equation}
\label{mu_solve}
\mu( z, \lambda, t ) = 1 - \frac{ 1 }{ \pi } \iint_{ \mathbb{C} } r( z, \zeta, t ) \overline{ \mu( z, \zeta, t ) } \frac{ d \Re \zeta d \Im \zeta }{ \zeta - \lambda }.
\end{equation}

\item Expanding $ \mu( z, \lambda, t ) $ as $ \lambda \to \infty $,
\begin{equation}
\label{mu_expansion}
\mu( z, \lambda, t ) = 1 + \frac{ \mu_{ -1 }( z, t ) }{ \lambda } + o\left( \frac{ 1 }{ | \lambda | } \right),
\end{equation}
we define $ v( z, t ) $ as
\begin{equation}
\label{v_main_formula}
v( z, t ) = - 2 \partial_{ z } \mu_{ -1 }( z, t ).
\end{equation}
\item It can be shown that
\begin{equation*}
L \psi = E \psi
\end{equation*}
where
\begin{gather*}
L = - 4 \partial_{ z } \partial_{ \bar z } + v( z, t ), \quad \overline{ v( z, t ) } = v( z, t ), \quad E = - 1\\
\psi( z, \lambda, t ) = e^{ -\frac{ 1 }{ 2 } ( \lambda \bar z + z / \lambda ) } \mu( z, \lambda, t ), \quad \lambda \in \mathbb{C}, \quad z \in \mathbb{C}, \quad t \in \mathbb{R}.
\end{gather*}
\end{enumerate}

\section{Estimate for the linearized case}
\label{lin_section}
Consider
\begin{equation}
\label{lin_solution}
\begin{aligned}
& I( t, z ) = \iint\limits_{ \mathbb{C} } f( \zeta ) \exp( S( \zeta, z, t ) ) d \Re \zeta d \Im \zeta, \\
& J( t, z ) = 3 \iint\limits_{ \mathbb{C} } \frac{ \zeta }{ \bar \zeta } f( \zeta ) \exp( S( \zeta, z, t ) ) d \Re \zeta d \Im \zeta,
\end{aligned}
\end{equation}
where $ z \in \mathbb{C}$, $ t \in \mathbb{R} $,  $ f( \zeta ) \in L^1( \mathbb{C} ) $, $ S $ is defined by
\begin{equation}
\label{s_definition}
S( \lambda, z, t ) = \frac{ 1 }{ 2 }\left( \left( \lambda - \frac{ 1 }{ \bar \lambda } \right) \bar z - \left( \bar \lambda - \frac{ 1 }{ \lambda } \right) z \right) + t \left( \lambda^3 + \frac{ 1 }{ \lambda^3 } - \bar \lambda^3 - \frac{ 1 }{ \overline \lambda^3 } \right).
\end{equation}

We will also assume that $ f( \zeta ) $ satisfies the following conditions
\begin{subequations}
\label{f_assumptions}
\begin{align}
\label{f_bounded}
& f \in C^{ \infty }( \bar D_+ ), \quad f \in C^{ \infty }( \bar D_- ), \\
\label{f_decrease}
& \partial^{ m }_{ \lambda } \partial^{ n }_{ \bar \lambda } f( \lambda ) =
\begin{cases}
& O\left( | \lambda |^{ -\infty } \right) \quad \text{ as } | \lambda | \to \infty, \\
& O\left( | \lambda |^{ \infty } \right) \quad \text{ as } | \lambda | \to 0,
\end{cases} \text{ for all $ m, n \geqslant 0 $,}
\end{align}
\end{subequations}
where
\begin{equation}
\label{d_sets}
D_+ = \{ \lambda \in \mathbb{C} \colon 0 < | \lambda | \leqslant 1 \}, \quad D_- = \{ \lambda \in \mathbb{C} \colon | \lambda | \geqslant 1 \},
\end{equation}
and $ \bar D_+ = D_+ \cup T $, $ \bar D_- = D_- \cup T $ with $ T $ defined by (\ref{t_definition}).

Note that if $ v( z, t ) = I( t, z ) $, $ w( z, t ) = J( t, z ) $, where
\begin{equation*}
( | \zeta |^3 + | \zeta |^{ -3 } ) f( \zeta ) \in L^{ 1 }( \mathbb{C} ) \text{ as a function of $ \zeta $,}
\end{equation*}
and, in addition,
\begin{equation*}
\overline{ f( \zeta ) } = f( - \zeta ) \quad \text{and/or} \quad \overline{f( \zeta )} = -| \zeta |^{-4} f\left( \frac{ 1 }{ \bar \zeta } \right),
\end{equation*}
then $ v $, $ w $ satisfy the linearized Novikov--Veselov equation (\ref{NV}) with $ E = - 1 $. Besides, if
\begin{equation}
\label{main_form}
f( \zeta ) = \frac{ \pi | 1 - \zeta \bar \zeta | }{ 2 | \zeta |^2 } b( \zeta ),
\end{equation}
where $ b( \zeta ) $ is the scattering data for the initial functions $ ( v_0( z ), w_0( z ) ) $ of the Cauchy problem (\ref{NV}), (\ref{init_value}), then the integrals $ I( t, z ) $, $ J( t, z ) $ of (\ref{lin_solution}) represent the approximation of the solution $ ( v( z, t ), w( z, t ) ) $ under the assumption that $ \parallel v \parallel \ll 1 $.

The goal of this section is to give, in particular, a uniform estimate of the large--time behavior of the integral $ I( t, z ) $ of (\ref{lin_solution}).

For this purpose we introduce parameter $ u = \frac{ z }{ t } $ and write the integral $ I $ in the following form
\begin{equation}
\label{new_form}
I( t, u ) = \iint\limits_{ \mathbb{C} } f( \zeta ) \exp( t S( u, \zeta ) ) d \Re \zeta d \Im \zeta,
\end{equation}
where
\begin{equation}
\label{s_function}
S( u, \zeta ) = \frac{ 1 }{ 2 } \left( \left( \zeta - \frac{ 1 }{ \bar \zeta } \right) \bar u - \left( \bar \zeta - \frac{ 1 }{ \zeta } \right) u \right) + \left( \zeta^3 - \bar \zeta^3 + \frac{ 1 }{ \zeta^3 } - \frac{ 1 }{ \bar \zeta^3 } \right).
\end{equation}

We will start by studying the properties of the stationary points of the function $ S( u, \zeta ) $ with respect to $ \zeta $. These points satisfy the equation
\begin{equation}
\label{stationary}
S'_{ \zeta } = \frac{ \bar u }{ 2 } - \frac{ u }{ 2 \zeta^2 } + 3 \zeta^2 - \frac{ 3 }{ \zeta^4 } = 0.
\end{equation}

The degenerate stationary points obey additionally the equation
\begin{equation}
\label{degenerate}
S''_{ \zeta \zeta } = \frac{ u }{ \zeta^3 } + 6 \zeta + \frac{ 12 }{ \zeta^5 } = 0.
\end{equation}

We denote $ \xi = \zeta^2 $ and
\begin{equation*}
Q( u, \xi ) = \frac{ u }{ 2 } - \frac{ \bar u }{ 2 \xi } + 3 \xi - \frac{ 3 }{ \xi^2 }.
\end{equation*}
For each $ \xi $, a root of the function $ Q( u, \xi ) $, there are two corresponding stationary points of $ S( u, \zeta ) $, $ \zeta = \pm \sqrt{ \xi } $.

The function $ S'_{ \zeta }( u, \zeta ) $ can be represented in the following form
\begin{equation}
\label{sprime_representation}
S'_{ \zeta }( u, \zeta ) = \frac{ 3 }{ \zeta^4 } ( \zeta^2 - \zeta_0^2( u ) ) ( \zeta^2 - \zeta_1^2( u ) ) ( \zeta^2 - \zeta_2^2( u ) ).
\end{equation}

We will also use hereafter the following notations:
\begin{equation*}
\mathcal{U} = \{ u = - 6 ( 2 e^{ - i \varphi } + e^{ 2 i \varphi } ) \colon \varphi \in [ 0, 2 \pi ) \}
\end{equation*}
and
\begin{equation*}
\mathbb{U} = \{ u = r e^{ i \psi } \colon \psi = \Arg( -6( 2 e^{ - i \varphi } + e^{ 2 i \varphi } ) ), \; 0 \leqslant r \leqslant | 6 ( 2 e^{ i \varphi } + e^{ - 2 i \varphi } ) |, \; \varphi \in [ 0, 2 \pi ) \},
\end{equation*}
the domain limited by the curve $ \mathcal{U} $.

\begin{lemma}[see \cite{KN1}]
\label{lin_lemma}
\rule{1pt}{0pt}

\begin{enumerate}
\item If $ u = - 18 e^{ \frac{ 2 \pi i k }{ 3 } } $, $ k = 0, 1, 2 $, then
\begin{equation}
\label{case_one_degen}
\zeta_0( u ) = \zeta_1( u ) = \zeta_2( u ) = e^{ \frac{ \pi i k }{ 3 } }
\end{equation}
and $ S( u, \zeta ) $ has two degenerate stationary points, corresponding to a third-order root of the function $ Q( u, \xi ) $, $ \xi_1 = e^{ \frac{ 2 \pi i k }{ 3 } } $.

\item If $ u \in \mathcal{U} $ $($ i.e. $ u = -6( 2 e^{ -i \varphi } + e^{ 2 i \varphi } ) $ $)$ and $ u \neq - 18 e^{ \frac{ 2 \pi i k }{ 3 } } $, $ k = 0, 1, 2 $, then
\begin{equation}
\label{case_two_degen}
\zeta_0( u ) = \zeta_1( u ) = e^{ -i \varphi / 2 }, \quad \zeta_2( u ) = e^{ i \varphi }.
\end{equation}

Thus $ S( u, \zeta ) $ has two degenerate stationary points, corresponding to a second-order root of the function $ Q( u, \xi ) $, $ \xi_1 = e^{ -i \varphi } $, and two non--degenerate stationary points corresponding to a first-order root, $ \xi_2 = e^{ 2 i \varphi } $.

\item If $ u \in \inters \mathbb{U} = \mathbb{U} \backslash \partial \mathbb{U} $, then
\begin{equation}
\label{case_non_degen_bound}
\zeta_i( u ) = e^{ - i \varphi_i } \text{ for some real } \varphi_i, \quad \text{and} \quad \zeta_i( u ) \neq \zeta_j( u ) \quad \text{for} \quad i \neq j.
\end{equation}
In this case the stationary points of $ S( u, \zeta ) $ are non-degenerate and correspond to the roots of the function $ Q( u, \xi ) $ with absolute values equal to 1.

\item If $ u \in \mathbb{C} \backslash \mathbb{U} $, then
\begin{equation}
\label{introduce_omega}
\zeta_0( u ) = ( 1 + \omega ) e^{ -i \varphi / 2 }, \quad \zeta_1( u ) = e^{ i \varphi }, \quad  \zeta_2( u ) = ( 1 + \omega )^{ -1 } e^{ -i \varphi / 2 }
\end{equation}
for certain real values $ \varphi $ and $ \omega > 0 $.

In this case the stationary points of the function $ S( u, \zeta ) $ are non-degenerate, and correspond to the roots of the function $ Q( u, \xi ) $ that can be expressed as $ \xi_0 = ( 1 + \tau ) e^{ -i \varphi } $, $ \xi_1 = e^{ 2 i \varphi } $, $ \xi_2 = ( 1 + \tau )^{ - 1 } e^{ -i \varphi } $, $ ( 1 + \tau ) = ( 1 + \omega )^2 $.

\end{enumerate}
\end{lemma}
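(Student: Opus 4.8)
The plan is to reduce the whole classification to an elementary study of a single cubic together with one symmetry of its roots. In the variable $\xi=\zeta^{2}$ the stationary equation (\ref{stationary}) is, after clearing the pole at $\xi=0$, equivalent to $Q(u,\xi)=0$; clearing denominators,
\begin{equation*}
p(\xi):=\tfrac{\xi^{2}}{3}\,Q(u,\xi)=\xi^{3}+\tfrac{u}{6}\,\xi^{2}-\tfrac{\bar u}{6}\,\xi-1=0 .
\end{equation*}
Since $p$ is monic with constant term $-1$, the product of its roots $\xi_0,\xi_1,\xi_2$ equals $1$; in particular none of them vanishes, so each $\xi_i$ gives exactly the two stationary points $\zeta=\pm\sqrt{\xi_i}$, and comparison of the elementary symmetric functions of the $\xi_i$ with the coefficients of $p$ reproduces the factorisation (\ref{sprime_representation}). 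The relation I shall invert below is the Vieta sum $\xi_0+\xi_1+\xi_2=-u/6$.

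The decisive observation is the identity $-\bar\xi^{3}\,p(1/\bar\xi)=\overline{p(\xi)}$, checked by direct substitution, which shows that the root set $\{\xi_0,\xi_1,\xi_2\}$ is invariant under the inversion $\xi\mapsto 1/\bar\xi$ in the unit circle $T$. Two consequences are immediate. Since $p$ has odd degree while this involution fixes only the points of $T$, at least one root always lies on $T$ — this is the root $e^{2i\varphi}$, i.e.\ the stationary point $\zeta=e^{i\varphi}$ present in every case. Moreover a multiple root must lie on $T$, for a multiple root off $T$ together with its distinct image $1/\bar\xi$ would account for at least four roots counted with multiplicity. Hence off $T$ the roots can occur only in inversion pairs $\{\rho e^{i\alpha},\rho^{-1}e^{i\alpha}\}$, and all degeneracies are confined to $T$.

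To locate the degenerate set I would use these last two facts. A multiple root is some $\xi_*=e^{-i\varphi}\in T$; because the product of all three roots is $1$, the remaining root is forced to equal $e^{2i\varphi}$, and the Vieta sum then gives $2e^{-i\varphi}+e^{2i\varphi}=-u/6$, that is $u=-6(2e^{-i\varphi}+e^{2i\varphi})$, which is precisely the curve $\mathcal U$. The triple-root condition $e^{-i\varphi}=e^{2i\varphi}$ reads $e^{3i\varphi}=1$, giving $\varphi=2\pi k/3$ and the three cusps $u=-18e^{2\pi ik/3}$ of part~1; every other $\varphi$ yields a double root $\xi_1=e^{-i\varphi}$ and a simple root $\xi_2=e^{2i\varphi}$ as in part~2. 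Passing to $\zeta=\pm\sqrt{\xi}$ turns these into the values stated in (\ref{case_one_degen}) and (\ref{case_two_degen}).

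Finally, on $\mathbb C\setminus\mathcal U$ the three roots are simple and depend continuously on $u$, and the number of them lying on $T$ is locally constant: by the inversion symmetry a root can leave $T$ only by colliding on $T$ with its partner $1/\bar\xi$, which happens exactly on $\mathcal U$. As $\mathcal U$ is a closed three-cusped curve separating $\mathbb C$ into the bounded region $\inters\mathbb U$ and its unbounded complement, it then suffices to inspect one point of each. At $u=0$ the cubic reduces to $\xi^{3}=1$, whose roots $1,e^{\pm 2\pi i/3}$ all lie on $T$, yielding part~3 with distinct $\zeta_i=e^{-i\varphi_i}$; as $|u|\to\infty$ one root grows like $-u/6$, its inversion partner tends to $0$, and the third remains on $T$, which is the configuration of part~4. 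Writing the off-circle pair as $(1+\tau)^{\pm1}e^{-i\varphi}$ and setting $(1+\tau)=(1+\omega)^{2}$ when extracting square roots then produces (\ref{introduce_omega}). The step demanding the most care is this last one: proving that $\mathcal U$ is a simple closed curve bounding $\inters\mathbb U$, and that the roots meet $T$ only over $\mathcal U$, so that the two sample computations indeed propagate to the whole of each connected component of $\mathbb C\setminus\mathcal U$.
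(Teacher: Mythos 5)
Your proof is sound in strategy, but note first that the paper you are being compared against contains no proof of Lemma \ref{lin_lemma} at all: the lemma is quoted from \cite{KN1}, where the analogous classification is obtained by direct analysis of the cubic. Your argument is therefore a genuinely independent, self-contained route, and an attractive one. You derive everything from two structural facts about the monic cubic $p(\xi)=\xi^3+\frac{u}{6}\xi^2-\frac{\bar u}{6}\xi-1$: its constant term is $-1$ (so the product of the roots is $1$), and $-\bar\xi^{3}p(1/\bar\xi)=\overline{p(\xi)}$ (so the root set is invariant under inversion in $T$). From this you get cheaply that at least one root always lies on $T$, that multiple roots can only lie on $T$, that off-circle roots come in inversion pairs, that Vieta pins the multiple-root locus to the deltoid $\mathcal{U}$ with the cusps at $u=-18e^{2\pi ik/3}$, and that the remaining two cases follow from a continuity-and-counting argument on the two components of $\mathbb{C}\setminus\mathcal{U}$ using the sample points $u=0$ and $|u|$ large. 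Compared with a Cardano or discriminant computation, this explains \emph{why} the unit circle and the deltoid appear, and it localizes all the real work in one topological fact.

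Three points must still be written out, all routine, so I would call these fillable omissions rather than fatal gaps. (i) You prove only the implication ``multiple root $\Rightarrow u\in\mathcal{U}$'', while parts 1 and 2 assert the converse; for $u=-6(2e^{-i\varphi}+e^{2i\varphi})$ you should verify $p(\xi)=(\xi-e^{-i\varphi})^{2}(\xi-e^{2i\varphi})$, a one-line expansion whose only nontrivial check is the middle Vieta coefficient $e^{-2i\varphi}+2e^{i\varphi}=-\bar u/6$ --- exactly where the conjugate symmetry of the coefficients of $p$ enters; the triple-root case $e^{3i\varphi}=1$ of part 1 then falls out. (ii) You use throughout that degenerate stationary points of $S$ correspond to multiple roots of $Q$; this deserves one line: since $S'_{\zeta}$ is $Q$ evaluated at $\xi=\zeta^{2}$, one has $S''_{\zeta\zeta}=2\zeta\,Q'_{\xi}(\zeta^{2})$ at a stationary point, and $\zeta\neq 0$ there. (iii) The step you flag as the delicate one --- that $\mathcal{U}$ is a simple closed curve, so that $\mathbb{C}\setminus\mathcal{U}$ has exactly the two components $\inters\,\mathbb{U}$ and $\mathbb{C}\setminus\mathbb{U}$ --- is in fact elementary: if $u(\varphi_1)=u(\varphi_2)$ with $e^{i\varphi_1}\neq e^{i\varphi_2}$, cancelling the factor $e^{i\varphi_1}-e^{i\varphi_2}$ gives $e^{i(2\varphi_1+\varphi_2)}+e^{i(\varphi_1+2\varphi_2)}=2$, which forces both unit numbers to equal $1$ and hence $e^{i\varphi_1}=e^{i\varphi_2}$, a contradiction; the Jordan curve theorem then applies (the cusps are irrelevant for it). Finally, be aware of a discrepancy you have inherited from the paper itself: substituting $\xi=\zeta^{2}$ into (\ref{stationary}) produces the equation $Q(\bar u,\xi)=0$, not $Q(u,\xi)=0$, with $Q$ as defined; since $\mathcal{U}$, $\mathbb{U}$ and the cusp set are invariant under $u\mapsto\bar u$, this amounts only to the relabelling $\varphi\mapsto-\varphi$, $k\mapsto -k$ in the parametrisations of (\ref{case_one_degen}), (\ref{case_two_degen}), (\ref{introduce_omega}) and (\ref{sprime_representation}), and affects nothing essential in either the lemma or your argument.
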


Formula (\ref{sprime_representation}) and Lemma \ref{lin_lemma} give a complete description of the stationary points of the function $ S( u, \zeta ) $.

In order to estimate the large--time behavior of the integral having the form
\begin{equation}
\label{more_complex_integral}
I( t, u, \lambda ) = \iint\limits_{ \mathbb{C} } f( \zeta, \lambda ) \exp( t S( u, \zeta ) ) d \Re \zeta d \Im \zeta
\end{equation}
(where $ S( u, \zeta ) $ is an imaginary--valued function) uniformly on $ u, \lambda \in \mathbb{C} $, in the present and the following sections we will use the following general scheme.
\begin{enumerate}
\item Consider $ D_{ \varepsilon } $, the union of disks with a radius of $ \varepsilon $ and centers in singular points of function $ f( \zeta, \lambda ) $ and stationary points of $ S( u, \zeta ) $ with respect to $ \zeta $.
\item Represent $ I( t, u, \lambda ) $ as the sum of integrals over $ D_{ \varepsilon } $ and $ \mathbb{C} \backslash D_{ \varepsilon } $:
\begin{equation}
\label{int_sum}
\begin{aligned}
& I( t, u, \lambda ) = I_{ int } + I_{ ext }, \quad \text{ where } \\
& I_{ int } = \iint\limits_{ D_{ \varepsilon } } f( \zeta, \lambda ) \exp( t S( u, \zeta ) ) d \Re \zeta d \Im \zeta, \\
& I_{ ext } = \iint\limits_{ \mathbb{ C } \backslash D_{ \varepsilon } } f( \zeta, \lambda ) \exp( t S( u, \zeta ) ) d \Re \zeta d \Im \zeta.
\end{aligned}
\end{equation}
\item Find an estimate of the form
\begin{equation*}
| I_{ int } | = O\left( \varepsilon^{ \alpha } \right), \quad \text{as} \quad \varepsilon \to 0 \quad ( \alpha \geqslant 1 )
\end{equation*}
uniformly on $ u $, $ \lambda $, $ t $.
\item Integrate $ I_{ ext } $ by parts using Stokes formula
\begin{multline}
\label{ext_by_parts}
I_{ ext } =  - \frac{ 1 }{ 2 i t } \int\limits_{ \partial D_{ \varepsilon } } \frac{ f( \zeta, \lambda ) \exp( t S( u, \zeta ) ) }{ S'_{ \zeta }( u, \zeta ) } d \bar \zeta - \\
 -\frac{ 1 }{ 2 i t } \int\limits_{ T \backslash D_{ \varepsilon } } \frac{ ( f_+( \zeta, \lambda ) - f_-( \zeta, \lambda ) ) \exp( t S( u, \zeta ) ) }{ S'_{ \zeta }( u, \zeta ) } d \bar \zeta
- \frac{ 1 }{ t } \iint\limits_{ \mathbb{C} \backslash D_{ \varepsilon } } \frac{ f'_{ \zeta }( \zeta, \lambda ) \exp( t S( u, \zeta ) ) }{ S'_{ \zeta }( u, \zeta ) } d \Re \zeta d \Im \zeta + \\
 +\frac{ 1 }{ t } \iint\limits_{ \mathbb{C} \backslash D_{ \varepsilon } } \frac{ f( \zeta, \lambda ) \exp( t S( u, \zeta ) ) S''_{ \zeta \zeta }( u, \zeta ) }{ ( S'_{ \zeta }( u, \zeta ) )^2 } d \Re \zeta d \Im \zeta
= - \frac{ 1 }{ t } ( I_1 + I_2 + I_3 - I_4 ),
\end{multline}
where $ f_{ \pm }( \zeta, \lambda ) = \lim\limits_{ \delta \to 0 } f( \zeta( 1 \mp \delta ), \lambda ) $, $ \zeta \in T $ and $ T $ is defined by (\ref{t_definition}).

\item For each $ I_i $ find an estimate of the form
\begin{equation*}
| I_i | = O\left( \frac{ 1 }{ \varepsilon^{ \beta } } \right), \quad \text{ as } \varepsilon \to 0.
\end{equation*}
\item Set $ \varepsilon = \dfrac{ 1 }{ ( 1 + | t | )^{ k } } $, where $ k( \alpha + \beta ) = 1 $, which yields the overall estimate
\begin{equation*}
| I( t, u, \lambda ) | = O\left( \dfrac{ 1 }{ ( 1 + | t | )^{ \frac{ \alpha }{ \alpha + \beta } } } \right), \quad \text{as} \quad | t | \to \infty.
\end{equation*}
\end{enumerate}

Using this scheme we obtain, in particular, the following result
\begin{lemma}
\label{lin_estimate_lemma}
Let a function $ f $ satisfy assumptions (\ref{f_assumptions}) and, additionally,
\begin{equation}
\label{zero_on_the_boundary}
f |_{ \,T } \equiv 0, \quad T = \{ \lambda \in \mathbb{C} \colon | \lambda | = 1 \}.
\end{equation}
Then the integral $ I( t, u ) $ of (\ref{new_form}) can be estimated
\begin{equation}
\label{lin_est}
| I( t, u ) | \leqslant \frac{ \const( f ) \ln( 3 + | t | ) }{ ( 1 + | t | )^{ 3/4 } } \quad \text{for} \quad t \in \mathbb{R}
\end{equation}
uniformly on $ u \in \mathbb{C} $.
\end{lemma}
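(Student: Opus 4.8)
The plan is to apply the six-step scheme set out above to the oscillatory integral $I(t,u)$ of (\ref{new_form}), the point being that $S(u,\zeta)$ is purely imaginary, so $|\exp(tS(u,\zeta))| = 1$ and every estimate reduces to controlling the geometry of the stationary points together with the decay of $f$. The decisive structural input is Lemma \ref{lin_lemma} read through the factorization (\ref{sprime_representation}): the stationary points are the six roots $\zeta = \pm\zeta_i(u)$, the degenerate ones (those also solving (\ref{degenerate})) occur only for $u\in\mathcal U$ and always lie on the unit circle $T$, the maximal degeneracy being the order-three coalescence $\zeta_0 = \zeta_1 = \zeta_2 = e^{\pi i k/3}$ at $u = -18e^{2\pi i k/3}$, while for $u\in\mathbb C\setminus\mathbb U$ the two stationary points that leave $T$ (those with $|\zeta|\ne 1$) are always simple. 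Thus the only places where $S'_\zeta$ vanishes to high order lie on $T$, which is exactly where hypothesis (\ref{zero_on_the_boundary}) forces $f$ to vanish.

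I would first record the two consequences of (\ref{zero_on_the_boundary}). Since $f\in C^\infty(\bar D_+)\cap C^\infty(\bar D_-)$ and $f|_T\equiv 0$, both one-sided limits of $f$ on $T$ vanish, so $f_+ - f_- \equiv 0$ and the jump term $I_2$ in (\ref{ext_by_parts}) drops out entirely. Moreover, near any point of $T$ one has $f(\zeta) = O(\mathrm{dist}(\zeta,T))$, which supplies precisely the extra power of the distance needed to tame the singular denominators $1/S'_\zeta$ and $S''_{\zeta\zeta}/(S'_\zeta)^2$ at the degenerate points.

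The heart of the matter is the neighborhood of the triple points $u = -18e^{2\pi i k/3}$ (and, similarly but less severely, of the curve $\mathcal U$ of double degeneracy). Writing $\rho$ for the distance to the coalescing stationary point $\zeta_*\in T$, the factorization (\ref{sprime_representation}) gives $S'_\zeta = O(\rho^3)$, hence $S''_{\zeta\zeta} = O(\rho^2)$, while (\ref{zero_on_the_boundary}) gives $f = O(\rho)$. Feeding these into (\ref{int_sum}) and (\ref{ext_by_parts}): the interior term obeys $|I_{int}|\le\iint_{D_\varepsilon}|f|\,dA = O(\varepsilon^3)$, so $\alpha = 3$; and each of $I_1$, $I_3$, $I_4$ is $O(1/\varepsilon)$ (for instance $I_4$ has integrand $O(\rho\cdot\rho^2/\rho^6) = O(\rho^{-3})$, and $\iint_{\rho>\varepsilon}\rho^{-3}\,dA = O(\varepsilon^{-1})$), so $\beta = 1$, the term $I_2$ being absent. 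Choosing $\varepsilon = (1+|t|)^{-1/4}$ as in step 6 then balances $I_{int} = O(\varepsilon^3) = O((1+|t|)^{-3/4})$ against $\tfrac1t I_{ext} = O(\varepsilon^{-1}/|t|) = O((1+|t|)^{-3/4})$, which is the asserted rate.

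It remains to dispose of the region of $u$ bounded away from the degeneracies, where all stationary points are simple and separated: there the scheme (or ordinary non-degenerate stationary phase) yields $O((1+|t|)^{-1}\ln(1+|t|))$, dominated by $(1+|t|)^{-3/4}$; in particular the off-$T$ simple points, where $f$ need not be small, contribute only at this faster rate and so must be accounted for by their genuine $t^{-1}$ decay rather than by the crude area bound. I expect the main obstacle to be the uniformity in $u$ across the transition, i.e. controlling $I$ while three stationary points coalesce on $T$ at an intermediate separation $\delta$ with $\varepsilon\lesssim\delta\lesssim 1$; one must track how the local orders of $S'_\zeta$ and the vanishing of $f$ interact over the whole range of $\delta$, and it is precisely the borderline logarithmically divergent pieces (the $O(\rho^{-2})$ behavior of the $I_4$-integrand near a simple point, together with the matching across the scales $\varepsilon$ and $\delta$) that produce the extra factor $\ln(3+|t|)$ in (\ref{lin_est}).
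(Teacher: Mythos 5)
Your overall strategy coincides with the paper's own: the six-step scheme applied with the case distinction driven by Lemma \ref{lin_lemma}, the vanishing of the jump term $I_2$ thanks to (\ref{zero_on_the_boundary}), the bound $|f(\zeta)| = O(\mathrm{dist}(\zeta,T))$, and the balance $\alpha = 3$, $\beta = 1$, $\varepsilon = (1+|t|)^{-1/4}$ giving the rate $(1+|t|)^{-3/4}$ when all stationary points lie on $T$ (the paper's Case 1, $u \in \mathbb{U}$, which includes the triple coalescence), together with the $O\left(\ln(1+|t|)/(1+|t|)\right)$ bound when the off-circle stationary points are separated from $T$ and from the origin (the paper's Case 2). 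These two regimes you handle correctly.

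However, there is a genuine gap: you explicitly leave unproven the transition regime --- $u \in \mathbb{C}\setminus\mathbb{U}$ with the off-circle roots $\zeta_0 = (1+\omega)e^{-i\varphi/2}$, $\zeta_2 = (1+\omega)^{-1}e^{-i\varphi/2}$ of (\ref{introduce_omega}) close to $T$, i.e. $\omega$ small --- calling it ``the main obstacle'' and speculating that matching across the scales $\varepsilon$ and $\delta$ will produce the logarithm. This regime is precisely where the work of the paper lies, and it is not a routine matching: no single choice of $\varepsilon$ as a power of $1/(1+|t|)$ works for the whole range of small $\omega$, because on $\partial D_\rho$ the lower bound for $|S'_\zeta|$ degenerates from $\const\,\rho^3/|\zeta|^4$ to $\const\,\rho\,\omega^2/|\zeta|^4$ (cf. (\ref{s_der_multi_case})), while $f$ near the off-circle points is only $O(\omega+\rho)$, not $O(\rho)$ (cf. (\ref{f_gen_estimate})). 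The paper resolves this with Lemma \ref{small_lemma}: the range of $\omega$ is covered by the window $0<\omega < 2(1+|t|)^{-1/4}$ (where the Case 1 argument survives verbatim), the window $\omega > (1+|t|)^{-1/8}$ (treated with the non-obvious choice $\varepsilon = (1+|t|)^{-3/8}$), and an infinite family of intermediate windows indexed by the recursively defined exponents $\gamma_{n+1} = \frac{2}{3}\gamma_n + \frac{1}{3}$, $\gamma_1 = \frac{1}{3}$, each with its own scale $\varepsilon = (1+|t|)^{-1/(2+2\gamma_{n+1})}$; each window must be checked to deliver the exponent $3/4$ (up to the logarithm) with constants uniform in $n$. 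Nothing in your sketch anticipates this construction, and without it (or a substitute) the claimed uniformity in $u$ is not established. A second, lesser omission: your ``region bounded away from the degeneracies'' does not cover $\omega \to \infty$, where $\zeta_2 \to 0$ approaches the singularity of $S$; there $S''_{\zeta\zeta}$ blows up and ordinary stationary phase is not uniform, which is why the paper needs its separate Case 4 (with $\tilde\omega = 1/(1+\omega)$).
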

Note that condition (\ref{zero_on_the_boundary}) is satisfied if $ f $ has the special form (\ref{main_form}). A detailed proof of Lemma \ref{lin_estimate_lemma} is given in Section \ref{proof_section}.

\section{Estimate for the non--linearized case}
\label{non_lin_section}
In this section we prove estimate (\ref{main_estimate}) for the solution $ v( x, t ) $ of the Cauchy problem for the Novikov--Veselov equation at negative energy with the initial data $ v( x, 0 ) $ satisfying properties (\ref{v_init_conditions}).

We proceed from the formulas (\ref{mu_expansion}), (\ref{v_main_formula}) for the potential $ v( z, t ) $ and the integral equation (\ref{mu_solve}) for $ \mu( z, \lambda, t ) $.

We write (\ref{mu_solve}) as
\begin{equation}
\label{mu_int}
\mu( z, \lambda, t ) = 1 + ( A_{ z, t } \mu )( z, \lambda, t ),
\end{equation}
where
\begin{equation*}
( A_{ z, t } f )( \lambda ) = \partial_{ \bar \lambda }^{ -1 }( r( \lambda ) \exp( i t S( u, \lambda ) ) \overline{ f( \lambda ) } ) = - \frac{ 1 }{ \pi } \iint\limits_{ \mathbb{C} } \frac{ r( \zeta ) \exp( t S( u, \zeta ) ) \overline{ f( \zeta ) } }{ \zeta - \lambda } d \Re \zeta d \Im \zeta
\end{equation*}
and $ S( u, \zeta ) $ is defined by (\ref{s_function}), $ u = \dfrac{ z }{ t } $.

Equation (\ref{mu_int}) can be also written in the form
\begin{equation}
\label{mu_an_form}
\mu( z, \lambda, t ) = 1 + A_{ z, t } \cdot 1 + ( A^2_{ z, t } \mu )( z, \lambda, t ).
\end{equation}
According to the theory of the generalized analytic functions (see \cite{V}), equations (\ref{mu_int}), (\ref{mu_an_form}) have a unique bounded solution for all $ z, t $. This solution can be written as
\begin{equation}
\label{mu_sol}
\mu( z, \lambda, t ) = ( I - A_{ z, t }^2 )^{ -1 }( 1 + A_{ z, t } \cdot 1 ).
\end{equation}
Equation (\ref{mu_sol}) implies the following formal asymptotic expansion
\begin{equation}
\label{expanded}
\mu( z, \lambda, t ) = ( I + A_{ z, t }^2 + A_{ z, t }^4 + \ldots )( 1 + A_{ z, t } \cdot 1 ).
\end{equation}

We also introduce functions $ \nu( z, \lambda, t ) = \partial_{ z } \mu( z, \lambda, t ) $ and $ \eta( z, \lambda, t ) = \partial_{ \bar z } \mu( z, \lambda, t ) $. In terms of these functions the potential $ v( z, t ) $ is obtained by the formula
\begin{equation}
\label{v_by_nu}
v( z, t ) = - 2 \nu_{ -1 }( z, t ),
\end{equation}
where $ \nu_{ -1 }( z, t ) $ is defined by expanding $ \nu( z, \lambda, t ) $ as $ | \lambda | \to \infty $:
\begin{equation*}
\nu( z, \lambda, t ) = \frac{ \nu_{ -1 }( z, t ) }{ \lambda } + o\left( \frac{ 1 }{ | \lambda | } \right), \quad | \lambda | \to \infty.
\end{equation*}

The pair of function $ \nu( z, \lambda, t ) $, $ \eta( z, \lambda, t ) $ satisfy the following system of differential equations:
\begin{equation}
\label{nu_eta_dif}
\begin{cases}
& \dfrac{ \partial \nu( z, \lambda, t ) }{ \partial \bar \lambda } = \partial_{ z } r( z, \lambda, t ) \overline{ \mu( z, \lambda, t ) } + r( z, \lambda, t ) \overline{ \eta( z, \lambda, t ) }, \\
& \dfrac{ \partial \eta( z, \lambda, t ) }{ \partial \bar \lambda } = \partial_{ z } r( z, \lambda, t ) \overline{ \mu( z, \lambda, t ) } + r( z, \lambda, t ) \overline{ \nu( z, \lambda, t ) }.
\end{cases}
\end{equation}
Equations (\ref{nu_eta_dif}) can also be written in the integral form
\begin{equation}
\label{nu_eta_int}
\begin{cases}
& \nu( z, \lambda, t ) = ( B_{ z, t } \mu )( z, \lambda, t ) + ( A_{ z, t } \eta )( z, \lambda, t ), \\
& \eta( z, \lambda, t ) = ( B_{ z, t } \mu )( z, \lambda, t ) + ( A_{ z, t } \nu )( z, \lambda, t ),
\end{cases}
\end{equation}
where operator $ B_{ z, t } $ is defined
\begin{equation}
( B_{ z, t } f )( \lambda ) = \partial_{ \bar \lambda }^{ -1 }( \partial_{ z } r( z, \lambda, t ) \overline{ f( \lambda ) } ) = - \frac{ 1 }{ \pi } \iint\limits_{ \mathbb{C} } \frac{ \partial_{ z } r( z, \zeta, t ) \overline{ f( \zeta ) } }{ \zeta - \lambda } d \Re \zeta d \Im \zeta.
\end{equation}

Thus for the function $ \nu( z, \lambda, t ) $ we obtain equation
\begin{equation*}
\nu = ( B_{ z, t } + A_{ z, t } B_{ z, t } ) \mu + A_{ z, t }^2 \nu,
\end{equation*}
or the following formal asymptotic expansion
\begin{equation}
\label{nu_representation}
\nu = ( I + A_{ z, t }^2 + A_{ z, t }^4 + \ldots )( ( B_{ z, t } + A_{ z, t } B_{ z, t } ) ( I + A_{ z, t }^2 + A_{ z, t }^4 + \ldots ) ( 1 + A_{ z, t } \cdot 1 ) ).
\end{equation}
We will write this formula in the form $ \nu = B_{ z, t } \cdot 1 + A_{ z, t } B_{ z, t } \cdot 1 + R_{ z, t }( \lambda ) $.

\begin{lemma}
\label{estimates_lemma}
Let $ f( \lambda, z, t ) $ be an arbitrary testing function such that
\begin{equation*}
\label{test_function_condition}
| f | \leqslant \frac{ c_f }{ ( 1 + | t | )^{ \delta } }, \quad | \partial_{ \lambda } f | \leqslant \frac{ c_f }{ ( 1 + | t | )^{ \delta } } \quad \forall \lambda \in \mathbb{C}, z \in \mathbb{C}, t \in \mathbb{R}
\end{equation*}
with some positive constant $ c_f $ independent of $ \lambda $, $ z $, $ t $ and some $ \delta \geqslant 0 $. Then:
\begin{enumerate}
\item The following estimates hold for $ B_{ z, t } \cdot f $:
\begin{equation*}
( B_{ z, t } \cdot f )( \lambda ) = \frac{ \beta_1( z, t ) }{ \lambda } + o\left( \frac{ 1 }{ | \lambda | } \right) \text{ for } \lambda \to \infty,
\end{equation*}
where
\begin{equation*}
\beta_1( z, t ) = \frac{ 1 }{ \pi } \iint_{ \mathbb{C} } \partial_{ z } r( z, \zeta, t ) \overline{ f( \zeta, z, t ) } d \Re \zeta d \Im \zeta,
\end{equation*}
and
\begin{equation}
\label{beta1_estimate}
| \beta_1( z, t ) | \leqslant \frac{ \hat \beta_1 c_f \ln( 3 + | t | ) }{ ( 1 + | t | )^{ 3/4 + \delta } };
\end{equation}
in addition,
\begin{equation}
\label{beta2_estimate}
| ( B_{ z, t } \cdot f )( \lambda ) | \leqslant \frac{ \hat \beta_2 c_f \ln( 3 + | t | ) }{ ( 1 + | t | )^{ 1/2 + \delta } } \text{ for } \lambda \in T,
\end{equation}
where $ T $ is defined by (\ref{t_definition}), and
\begin{equation}
\label{beta3_estimate}
| ( B_{ z, t } \cdot f )( \lambda ) | \leqslant \frac{ \hat \beta_3 c_f }{ ( 1 + | t | )^{ 1/4 + \delta } } \quad \forall \lambda \in \mathbb{C}.
\end{equation}

\item The following estimates hold for $ A_{ z, t } \cdot B_{ z, t } \cdot f $:
\begin{equation*}
( A_{ z, t } \cdot B_{ z, t } \cdot f )( \lambda ) = \frac{ \alpha_1( z, t ) }{ \lambda } + o\left( \frac{ 1 }{ | \lambda | } \right) \text{ for } \lambda \to \infty,
\end{equation*}
where
\begin{equation}
\label{alpha1_integral}
\alpha_1( z, t ) = - \frac{ 1 }{ \pi^2 } \iint\limits_{ \mathbb{C} } d \Re \zeta d \Im \zeta \, r( z, \zeta, t ) \iint\limits_{ \mathbb{C} } \frac{ \partial_{ z } r( z, \eta, t ) }{ \eta - \zeta }  \overline{ f( \eta, z, t ) } d \Re \eta \, d \Im \eta,
\end{equation}
and
\begin{equation}
\label{alpha1_estimate}
| \alpha_1( z, t ) | \leqslant \frac{ \hat \alpha_1 c_f }{ ( 1 + | t | )^{ 3/4 + \delta } };
\end{equation}
in addition,
\begin{equation}
\label{alpha2_estimate}
| ( A_{ z, t } \cdot B_{ z, t } \cdot f )( \lambda ) | \leqslant \frac{ \hat \alpha_2 c_f }{ ( 1 + | t | )^{ 1/2 + \delta } } \quad \forall \lambda \in \mathbb{C}.
\end{equation}

\item The following estimates for $ A^{ n }_{ z, t } \cdot f $ hold:
\begin{equation*}
( A^{ n }_{ z, t } \cdot f )( \lambda ) = \frac{ \gamma_n( z, t ) }{ \lambda } + o\left( \frac{ 1 }{ | \lambda | } \right) \text{ for } \lambda \to \infty,
\end{equation*}
where
\begin{equation*}
\gamma_n( z, t ) = \frac{ 1 }{ \pi } \iint\limits_{ \mathbb{C} } r( z, \zeta, t ) \overline{ ( A^{ n - 1 }_{ z, t } \cdot f )( \zeta ) } d \Re \zeta d \Im \zeta
\end{equation*}
and
\begin{equation}
\label{gamma1_estimate}
| \gamma_n( z, t ) | \leqslant \frac{ ( \hat \gamma_1 )^n c_f }{ ( 1 + | t | )^{ \delta + \frac{ 1 }{ 5 } \lceil \frac{ n - 1 }{ 2 } \rceil + \frac{ 2 }{ 5 } } },
\end{equation}
where $ \lceil s \rceil $ denotes the smallest integer following $ s $. In addition,
\begin{equation}
\label{gamma2_estimate}
| ( A^{ n }_{ z, t } \cdot f )( \lambda ) | \leqslant \frac{ ( \hat \gamma_2 )^n c_f }{ ( 1 + | t | )^{ \delta + \frac{ 1 }{ 5 } \lceil \frac{ n }{ 2 } \rceil } } \quad \forall \lambda \in \mathbb{C}.
\end{equation}

\item The following estimate holds for $ R_{ z, t } $:
\begin{equation}
R_{ z, t }( \lambda ) = \frac{ q( z, t ) }{ \lambda } + o\left( \frac{ 1 }{ | \lambda | } \right) \text{ for } \lambda \to \infty,
\end{equation}
and
\begin{equation}
\label{q_estimate}
| q( z, t ) | \leqslant \frac{ \hat q( c_f ) }{ ( 1 + | t | )^{ 9/10 } }.
\end{equation}

\end{enumerate}
\end{lemma}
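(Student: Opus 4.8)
All four parts reduce to the same kind of object: after extracting the relevant $1/\lambda$--coefficient (for $\beta_1,\alpha_1,\gamma_n,q$) or keeping the Cauchy kernel in place (for the pointwise bounds), every quantity is an oscillatory integral $\iint(\text{amplitude})\exp(tS(u,\zeta))\,d\Re\zeta\,d\Im\zeta$ with the purely imaginary phase $S(u,\zeta)$ of (\ref{s_function}). My plan is to estimate each one with the six--step localization/integration--by--parts scheme (cut out $\varepsilon$--disks $D_\varepsilon$ around the stationary points of $S$, the poles $\zeta=\lambda$ and the discontinuity circle $T$; apply the Stokes formula (\ref{ext_by_parts}) on the complement; optimize $\varepsilon=(1+|t|)^{-k}$), using Lemma \ref{lin_estimate_lemma} as the sharp input whenever the amplitude vanishes on $T$. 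Since the hypotheses $|f|,|\partial_\lambda f|\leqslant c_f(1+|t|)^{-\delta}$ let me factor out one $c_f(1+|t|)^{-\delta}$ from every bound, it remains in each case to track the $t$--power produced by the geometry.

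The decisive computation for Part 1 is that, because $u=z/t$ and $S$ depends on $z$ only through $u$,
\[
\partial_z r(z,\zeta,t) = -\tfrac{1}{2}\left(\bar\zeta-\tfrac{1}{\zeta}\right)r(\zeta)\exp(tS(u,\zeta)),
\]
so the amplitude of $B_{z,t}$ carries the factor $\bar\zeta-1/\zeta$, which vanishes on $T$ (there $\bar\zeta=1/\zeta$) and thereby annihilates the jump of $r(\zeta)=\pi\sgn(1-|\zeta|^2)b(\zeta)/\bar\zeta$ across $T$. For $\beta_1$ there is no Cauchy kernel, so $-\tfrac12(\bar\zeta-1/\zeta)r(\zeta)\overline{f}$ is an admissible amplitude vanishing on $T$, and the argument proving Lemma \ref{lin_estimate_lemma} yields the rate $\ln(3+|t|)(1+|t|)^{-3/4}$, i.e. (\ref{beta1_estimate}). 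For (\ref{beta2_estimate}) and (\ref{beta3_estimate}) I keep $1/(\zeta-\lambda)$ and add $\zeta=\lambda$ to the centres of $D_\varepsilon$. When $\lambda\in T$ the factor $\bar\zeta-1/\zeta$ vanishes to first order exactly where $\zeta-\lambda$ does, so the pole is tamed and the scheme gives the intermediate rate $\ln(3+|t|)(1+|t|)^{-1/2}$; for a general $\lambda$ the pole is genuine, the exponent $\beta$ is larger, and one only recovers the crude universal rate $(1+|t|)^{-1/4}$.

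In Parts 2--4 the amplitude of the outermost $A_{z,t}$ is $r(\zeta)$ itself, which does not vanish on $T$ and jumps there; hence the bottleneck in (\ref{ext_by_parts}) is the jump term over $T\backslash D_\varepsilon$, whose magnitude is governed by the value on $T$ of the function to which $A_{z,t}$ is applied. Part 2 then follows by feeding $g=B_{z,t}f$ --- which by Part 1 is $O(\ln(3+|t|)(1+|t|)^{-1/2-\delta})$ on $T$ and $O((1+|t|)^{-1/4-\delta})$ globally --- into the scheme for $A_{z,t}$: the on--$T$ smallness controls the jump term and produces (\ref{alpha1_estimate}) and (\ref{alpha2_estimate}). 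Part 3 is an induction on $n$ in which the bound for $A_{z,t}(A_{z,t}^{n-1}f)$ uses both the global and the sharper on--$T$ estimate for $A_{z,t}^{n-1}f$; tracking how these two powers leapfrog gives the gain of $1/5$ per two applications recorded in (\ref{gamma1_estimate}), (\ref{gamma2_estimate}), while the operator norm at each step produces the constants $(\hat\gamma_{1,2})^n$. Finally, for Part 4 I expand $R_{z,t}=\nu-B_{z,t}\cdot1-A_{z,t}B_{z,t}\cdot1$ from (\ref{nu_representation}) into the series of all terms $A_{z,t}^{2m}B_{z,t}A_{z,t}^{k}\cdot1$ and $A_{z,t}^{2m+1}B_{z,t}A_{z,t}^{k}\cdot1$ other than $B_{z,t}\cdot1$ and $A_{z,t}B_{z,t}\cdot1$, estimate each by Parts 1--3 (crucially retaining the on--$T$ refinements, since the naive global powers alone would be too slow, e.g. for $A_{z,t}^2B_{z,t}\cdot1$), and sum the geometrically convergent series; the slowest surviving rate is $(1+|t|)^{-9/10}$, which is (\ref{q_estimate}).

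The genuine difficulty throughout is uniformity in $u=z/t$ and in $\lambda$. As $u$ ranges over $\mathbb{C}$ the stationary points of $S$ move and, by Lemma \ref{lin_lemma}, they may degenerate to second or third order (on $\mathcal{U}$, and at $u=-18e^{2\pi ik/3}$), lie exactly on $T$ (cases 1--3), or approach the pole $\zeta=\lambda$. The worst decay arises precisely at the confluence of a degenerate stationary point with the pole and/or with the $T$--discontinuity of $r$, and one must bound $I_{int}$, the jump term, and the term $I_4$ carrying $S''_{\zeta\zeta}/(S'_\zeta)^2$ (which blows up as stationary points collide) uniformly through all such confluences. Checking that the choice $\varepsilon=(1+|t|)^{-k}$ with $k(\alpha+\beta)=1$ reproduces exactly the fractional exponents above --- in particular the $1/5$ increments and the final $9/10$ --- is the uniform bookkeeping I expect to be the main obstacle.
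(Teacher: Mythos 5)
Your Parts 1, 2 and 4 follow the paper's own proof essentially step for step. For Part 1 the paper likewise proves (\ref{beta1_estimate}) by running the proof of Lemma \ref{lin_estimate_lemma} on the amplitude $\partial_z r\,\overline{f}$ (your computation $\partial_z r(z,\zeta,t)=-\tfrac12(\bar\zeta-1/\zeta)\,r(\zeta)\exp(tS(u,\zeta))$, which reproduces the form (\ref{main_form}) and hence the vanishing condition (\ref{zero_on_the_boundary}), is exactly the point the paper leaves implicit), and proves (\ref{beta2_estimate})--(\ref{beta3_estimate}) by adding $\lambda$ to the centres of $D_\varepsilon$. For Part 2 the paper, like you, observes that the jump term $I_2$ no longer vanishes but is controlled by the sharper on-$T$ bound (\ref{beta2_estimate}) of $B_{z,t}f$. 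For Part 4 the paper splits $R_{z,t}$ into $B(A+A^2+\cdots)\cdot1$, $AB(A+A^2+\cdots)\cdot1$ and $(A+A^2+\cdots)AB(I+A+\cdots)\cdot1$, estimates termwise by Parts 1--3, sums the geometric series, and identifies the same bottleneck $1/2+2/5=9/10$.

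The genuine gap is in Part 3. You assert that the induction "uses both the global and the sharper on-$T$ estimate for $A^{n-1}_{z,t}f$" and that these two powers leapfrog. No such on-$T$ refinement for iterates of $A_{z,t}$ exists, is proved in the paper, or could be expected: unlike $B_{z,t}f$, the function $A^{n-1}_{z,t}f$ carries no factor vanishing on $T$, and the amplitude $r\,\overline{A^{n-1}_{z,t}f}$ genuinely jumps across $T$ because $r$ does; moreover $I_2$ is not where the two-step structure originates (the global bound (\ref{gamma2_estimate}) suffices there). The actual engine of the paper's induction, absent from your outline, is the differential identity $\partial_{\bar\lambda}\bigl(A^n_{z,t}f\bigr)=r(z,\lambda,t)\,\overline{\bigl(A^{n-1}_{z,t}f\bigr)}$, equivalently $\partial_\lambda\overline{\bigl(A^n_{z,t}f\bigr)}=\overline{r(z,\lambda,t)}\,\bigl(A^{n-1}_{z,t}f\bigr)$ up to the bounded factor $\overline{r}$. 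This identity is indispensable for the term $I_3$ of (\ref{ext_by_parts}), which requires the $\zeta$-derivative of the amplitude: the testing hypothesis supplies derivative bounds only for $f$ itself, not for its iterates, so without the identity $I_3$ is not estimable at all and your induction cannot close. It is precisely this drop from the $n$-th iterate back to the $(n-1)$-st inside $I_3$ that, after optimizing $\varepsilon=(1+|t|)^{-1/5}$, produces the $\lceil\frac{n-1}{2}\rceil$ bookkeeping — the gain of $1/5$ per two applications — in (\ref{gamma1_estimate})--(\ref{gamma2_estimate}); the mechanism you describe would not reproduce it.
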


A detailed proof of Lemma \ref{estimates_lemma} is given in Section \ref{proof_section}.

From formulas (\ref{v_by_nu}), (\ref{nu_representation}) and Lemma \ref{estimates_lemma} follows immediately the following theorem.
\begin{theorem}
Let $ v( x, t ) $ be the ``inverse scattering solution'' of the Cauchy problem for the Novikov--Veselov equation (\ref{NV}) with $ E = -1 $ and the initial data $ v( x, 0 ) = v_0( x ) $ satisfying (\ref{v_init_conditions}). Then
\begin{equation*}
| \, v( x, t ) | \leqslant \frac{ \const( v_0 ) \ln( 3 + | t | ) }{ ( 1 + | t | )^{ 3/4 } }, \quad x \in \mathbb{R}^2, t \in \mathbb{R}.
\end{equation*}
\end{theorem}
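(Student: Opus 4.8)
The plan is to read the bound off directly from the representation (\ref{v_by_nu}) of the potential, the decomposition recorded after (\ref{nu_representation}), and the estimates collected in Lemma \ref{estimates_lemma}, specialized to the constant testing function. Since the statement preceding the theorem already asserts that it ``follows immediately,'' the work is pure bookkeeping once the lemma is granted; I will make that assembly explicit and then indicate where the genuine content lies.

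First I would fix $f \equiv 1$ in Lemma \ref{estimates_lemma}. Because $|1| = 1$ and $|\partial_\lambda 1| = 0$, the hypotheses hold with $c_f = 1$ and $\delta = 0$, uniformly in $z, \lambda, t$. By (\ref{v_by_nu}) we have $v(z,t) = -2 \nu_{-1}(z,t)$, where $\nu_{-1}$ is the coefficient of $\lambda^{-1}$ in the large-$\lambda$ expansion of $\nu(z,\lambda,t)$. Using the decomposition $\nu = B_{z,t}\cdot 1 + A_{z,t}B_{z,t}\cdot 1 + R_{z,t}(\lambda)$ from (\ref{nu_representation}) and extracting the $\lambda^{-1}$ coefficient of each summand, I obtain
\begin{equation*}
\nu_{-1}(z,t) = \beta_1(z,t) + \alpha_1(z,t) + q(z,t),
\end{equation*}
with $\beta_1$, $\alpha_1$, $q$ exactly the coefficients appearing in parts 1, 2 and 4 of Lemma \ref{estimates_lemma}.

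Next I would insert the three estimates (\ref{beta1_estimate}), (\ref{alpha1_estimate}) and (\ref{q_estimate}), evaluated at $c_f = 1$, $\delta = 0$, to get
\begin{equation*}
|v(z,t)| = 2|\nu_{-1}(z,t)| \leqslant 2\left( \frac{ \hat\beta_1 \ln(3+|t|) }{ (1+|t|)^{3/4} } + \frac{ \hat\alpha_1 }{ (1+|t|)^{3/4} } + \frac{ \hat q }{ (1+|t|)^{9/10} } \right).
\end{equation*}
Since $(1+|t|)^{-9/10} \leqslant (1+|t|)^{-3/4}$ and $1 \leqslant \ln(3+|t|)$ for all $t \in \mathbb{R}$, every term is dominated by $(1+|t|)^{-3/4}\ln(3+|t|)$, so the three contributions collapse into a single bound of the asserted form with $\const(v_0) = 2(\hat\beta_1 + \hat\alpha_1 + \hat q)$. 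Passing back to $x \in \mathbb{R}^2$ through $z = x_1 + ix_2$, and using that the estimates of Lemma \ref{estimates_lemma} are uniform in $z \in \mathbb{C}$ and $t \in \mathbb{R}$, yields the theorem.

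The only point needing care---and the substance actually deferred to the lemma---is the legitimacy of reading off $\nu_{-1}$ term by term from the Neumann-type series (\ref{nu_representation}): one needs $I + A_{z,t}^2 + A_{z,t}^4 + \cdots$ to converge strongly enough to justify termwise extraction of the $\lambda^{-1}$ asymptotics and to fold all higher-order terms into the single remainder $R_{z,t}$. The geometric-in-$n$ bounds (\ref{gamma2_estimate}), which additionally carry extra decay in $t$, supply precisely this and are what produce the favorable exponent $9/10 > 3/4$ in (\ref{q_estimate}). Thus I expect no real obstacle at the level of the theorem itself. The genuine difficulty lives one level down, in the sharp $3/4$ decay of the leading coefficients $\beta_1$ and $\alpha_1$: this traces back through Lemma \ref{estimates_lemma} to the linearized estimate of Lemma \ref{lin_estimate_lemma}, and ultimately to the stationary-phase analysis of $S(u,\zeta)$ near its degenerate stationary points (the confluent cases 1 and 2 of Lemma \ref{lin_lemma}), carried out uniformly in $u$.
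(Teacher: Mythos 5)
Your proposal is correct and follows exactly the paper's own route: the paper proves this theorem by the single remark that it ``follows immediately'' from (\ref{v_by_nu}), (\ref{nu_representation}) and Lemma \ref{estimates_lemma}, and your assembly --- taking $ f \equiv 1 $ with $ c_f = 1 $, $ \delta = 0 $, writing $ \nu_{ -1 } = \beta_1 + \alpha_1 + q $, and summing the bounds (\ref{beta1_estimate}), (\ref{alpha1_estimate}), (\ref{q_estimate}) --- is precisely that intended bookkeeping made explicit. You also correctly locate where the real work lies (the lemma's $ 3/4 $--decay estimates and the convergence of the series absorbed into $ R_{ z, t } $), so nothing is missing.
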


\section{Optimality of estimates (\ref{main_estimate}) and (\ref{lin_est})}
\label{optim_section}
In this section we show that estimates (\ref{main_estimate}) and (\ref{lin_est}) are optimal in the following sense: there exists such a line $ z = \hat u t $ that along this line $ I( z, t ) $ from (\ref{lin_est}) behaves asymptotically as $ \frac{ \const }{ ( 1 + | t | )^{ 3/4 } } $ with come nonzero constant; there exist such initial data satisfying (\ref{v_init_conditions}) that the corresponding solution $ v( z, t ) $ behaves asymptotically as $ \frac{ \const }{ ( 1 + | t | )^{ 3/4 } } $ as $ | t | \to \infty $, where the constant is nonzero.

\subsection{Optimality of the estimate for the linearized case}
\label{optim_lin_section}
Let us consider the integral
\begin{equation}
\label{i_particular_form}
I( t, u ) = \iint\limits_{ \mathbb{C} } f( \zeta ) \exp( t S( u, \zeta ) ) d \Re \zeta d \Im \zeta,
\end{equation}
where
\begin{equation}
\label{f_particular_form}
f( \zeta ) =\frac{ \pi | 1 - \zeta \bar \zeta | }{ 2 | \zeta |^2 } b( \zeta ),
\end{equation}
with $ b( \zeta ) $ satisfying (\ref{f_assumptions}), and $ S( u, \zeta ) $ is defined by (\ref{s_function}), for $ u = \hat u = -18 $. As shown in Lemma \ref{lin_lemma} for this value of parameter $ u $ the phase $ S( \hat u, \zeta ) = S( \zeta ) $ has two degenerate stationary points $ \zeta = \pm 1 $ of the third order.

To calculate the exact asymptotic behavior of $ I( t, \hat u ) $ we will use the classic stationary method as described in \cite{Fe}. First of all, we note that $ f( \zeta ) $ is continuous, but not continuously differentiable on $ \mathbb{C} $. Thus we will consider separately the integrals
\begin{equation*}
I_{ + }( t ) = \iint\limits_{ D_{ + } } f( \zeta ) \exp( t S( \zeta ) ) d \Re \zeta d \Im \zeta, \quad I_{ - }( t ) = \iint\limits_{ D_{ - } } f( \zeta ) \exp( t S( \zeta ) ) d \Re \zeta d \Im \zeta,
\end{equation*}
where $ D_+ $ and $ D_- $ are defined in (\ref{d_sets}).

Let us introduce the partition of unity $ \psi_{ 1 }( \zeta ) + \psi_{ 0 }( \zeta ) + \psi_{ -1 }( \zeta ) \equiv 1 $, such that $ 0 \leqslant \psi_i \leqslant 1 $, $ \psi_{ i } \in C^{ \infty }( \mathbb{C} ) $, $ \psi_{ \pm 1 }( \zeta ) \equiv 1 $ in some neighborhood of $ \zeta = \pm 1 $, respectively, and $ \psi_{ \pm 1 }( \zeta ) \equiv 0 $ everywhere outside some neighborhood of $ \zeta = \pm 1 $ respectively. Then it is known (see \cite{Fe}) that
\begin{multline*}
I_{ + }( t ) = \iint\limits_{ D_{ + } } f( \zeta ) \psi_{ 1 }( \zeta ) \exp( t S( \zeta ) ) d \Re \zeta d \Im \zeta + \iint\limits_{ D_{ + } } f( \zeta ) \psi_{ -1 }( \zeta ) \exp( t S( \zeta ) ) d \Re \zeta d \Im \zeta + O\left( \frac{ 1 }{ | t | } \right) = \\
= I_{ + }^{ + }( t ) + I_{ + }^{ - }( t ) + O\left( \frac{ 1 }{ | t | } \right) \text{ as } | t | \to \infty.
\end{multline*}

First we will estimate $ I_{ + }^{ + }( t ) $. We note that for the phase $ S( \zeta ) $ the following representation is valid
\begin{equation*}
S( \zeta ) = P( \zeta ) - P( \bar \zeta ),
\end{equation*}
where $ P( \zeta ) $ is a holomorphic function defined by
\begin{equation*}
P( \zeta ) = -9 \zeta - \frac{ 9 }{ \zeta } + \zeta^3 + \frac{ 1 }{ \zeta^3 } + 16;
\end{equation*}
in addition, $ P_{ \zeta }( \zeta ) = S_{ \zeta }( \zeta ) = \frac{ 3 }{ \zeta^4 }( \zeta - 1 )^3( \zeta + 1 )^3 $.

Note that $ P( \zeta ) $ can be written in the form $ P( \zeta ) = \rho( \zeta ) ( \zeta - 1 )^4 $, where $ \rho( \zeta ) = \frac{ \zeta^2 + 4 \zeta + 1 }{ \zeta^3 } $ and $ \lim\limits_{ \zeta \to 1 } \rho( \zeta ) \neq 0 $. For the function $ \rho( \zeta ) $ the expression $ ( \rho( \zeta ) )^{ 1/4 } $ can be uniquely defined in some neighborhood of $ \zeta = 1 $. Further, we define the transformation $ \zeta \to \eta $:
\begin{equation*}
\eta = ( \rho( \zeta ) )^{ 1/4 } ( \zeta - 1 ).
\end{equation*}
Since we have that
\begin{equation}
\label{der_in_zero}
\left. \frac{ \partial \eta }{ \partial \zeta } \right|_{ \zeta = 1 } = \sqrt[4]{6} \neq 0,
\end{equation}
the inverse transformation $ \zeta = \varphi( \eta ) $ is defined in some small neighborhood of $ \eta = 0 $. In terms of the new variable $ \eta $ the phase can be represented
\begin{equation*}
S( \zeta ) = \eta^4 - \bar \eta^4.
\end{equation*}

Now if we denote $ x = \Re \eta $, $ y = \Im \eta $, the integral $ I_{ + }^{ + }( t ) $ becomes
\begin{equation*}
I_{ + }^{ + }( t ) = \iint\limits_{ \Delta_+ } \tilde f( x + i y ) \tilde \psi_1( x + i y ) \exp( 3 i t x y ( x^2 - y^2 ) ) | \partial_{ \eta } \varphi( x + i y ) |^2 dx dy,
\end{equation*}
where $ \tilde f = f \circ \varphi $, $ \tilde \psi_1 = \psi_{ 1 } \circ \varphi $ and $ \Delta_+ =\{ ( x, y ) \in \mathbb{R}^2 \colon x < 0 \} $, i.e. $ \Delta_+ $ is the half-plane containing the image of $ D_+ \cap B_{ \varepsilon }( 1 ) $ under the transformation $ x + i y = \varphi^{ -1 }( \zeta ) $.

The integral $ I_{ + }^{ + }( t ) $ can be written in the form
\begin{equation*}
I_{ + }^{ + }( t ) = \int\limits_{ -\infty }^{ +\infty } dc \exp( 3 i t c ) \hspace{-0.2cm} \int\limits_{ \gamma_c \cap \Delta_+ } \hspace{-0.2cm} \tilde f( x + i y ) \tilde \psi_1( x + i y ) | \partial_{ \eta } \varphi( x + i y ) |^2 d \omega_S,
\end{equation*}
where $ d \omega_S $ is the Gelfand--Leray differential form, defined as
\begin{equation}
\label{g_l_form}
d S \wedge d \omega_S = dx \wedge dy
\end{equation}
and in the particular case under study equal to
\begin{equation*}
d \omega_S = \frac{ -( x^3 - 3 x y^2 ) dx + ( 3 x^2 y - y^3 ) dy }{ ( x^2 + y^2 )^3 };
\end{equation*}
$ \gamma_c $ is an oriented contour consisting of points of the set $ \{ S( x, y ) = c \} $ with the orientation chosen so that (\ref{g_l_form}) holds.

As $ \tilde \psi_1( x + i y ) $ is equal to zero outside some $ B_R( 0 ) $, a disk of radius $ R $ centered in the origin, then there exists such $ c_* > 0 $ that the set $ \{ S( x, y ) = c \} $ lies outside $ B_R( 0 ) $ for any $ c < - c_* $, $ c > c_* $. Thus the integral $ I_{ + }^{ + }( t ) $ can be written
\begin{equation*}
I_{ + }^{ + }( t ) = \int\limits_{ -c_* }^{ c_* } dc \exp( 3 i t c ) \hspace{-0.2cm} \int\limits_{ \gamma_c \cap \Delta_+ } \hspace{-0.2cm} \tilde f( x + i y ) \tilde \psi_1( x + i y ) | \partial_{ \eta } \varphi( x + i y ) |^2 d \omega_S.
\end{equation*}

Performing the change of variables
\begin{equation*}
\begin{cases}
& x \mapsto c^{ 1/4 } x, \quad y \mapsto c^{ 1/4 } y \quad \text{ for } c > 0, \\
& x \mapsto (-c)^{ 1/4 } x, \quad y \mapsto (-c)^{ 1/4 } y \quad \text{ for } c < 0
\end{cases}
\end{equation*}
yields
\begin{equation*}
I_{ + }^{ + }( t ) = \int\limits_{ 0 }^{ c_* } \frac{ dc \exp( 3 i t c ) }{ c^{ 1/2 } } F_{ + }( c ) + \int\limits_{ -c_* }^{ 0 } \frac{ dc \exp(3  i t c ) }{ ( -c )^{ 1/2 } } F_{ - }( c ),
\end{equation*}
where
\begin{equation}
\label{F_integrals}
F_{ + }( c ) = \hspace{-0.9cm} \int\limits_{ \gamma_+ \cap \{ c^{ 1/4 }( x, y ) \in \Delta_{+} \} } \hspace{-0.9cm} \mathcal{F}( c, x, y ) d \omega_S, \quad F_{ - }( c ) = \hspace{-1.1cm} \int\limits_{ \gamma_- \cap \{ (-c)^{1/4}( x, y ) \in \Delta_+ \} } \hspace{-1.1cm} \mathcal{F}( -c, x, y ) d \omega_S,
\end{equation}
$ \gamma_+ $, $ \gamma_- $ are oriented sets consisting of points of the sets $ \{ S( x, y ) = 1 \} $, $ \{ S( x, y ) = -1 \} $ correspondingly with orientation chosen so that (\ref{g_l_form}) holds and
\begin{equation*}
\mathcal{F}( c, x, y ) = \tilde f( c^{1/4}( x + i y ) ) \tilde \psi_1( c^{1/4}( x + i y ) ) | \partial_{ \eta } \varphi( c^{1/4}( x + i y ) ) |^2.
\end{equation*}
For any fixed positive $ c $ the integrals in (\ref{F_integrals}) converge because the set $ \{ S( x, y ) = 1 \} $ is separated from zero and, consequently, the denominator does not vanish, and because $ \tilde \psi_1 $ is a function with a bounded support and thus the domains of integration in (\ref{F_integrals}) are, in fact, bounded. Besides, since $ \Delta_+ $ is a conic set, the functions $ F_+( c ) $ and $ F_-( c ) $ can be expressed as follows
\begin{equation}
\label{stop_formula}
F_{ + }( c ) = \hspace{-0.3cm} \int\limits_{ \gamma_+ \cap \Delta_{+} } \hspace{-0.3cm} \mathcal{F}( c, x, y ) d \omega_S, \quad F_{ - }( c ) = \hspace{-0.4cm} \int\limits_{ \gamma_- \cap \Delta_+ } \hspace{-0.4cm} \mathcal{F}( -c, x, y ) d \omega_S,
\end{equation}

In some neighborhood $ \mathcal{U}_0 $ of $ c^{ 1/4 }( x + i y ) = 0 $ containing the support of $ \tilde \psi_1 $ the function $ \tilde f( c^{ 1/4 }( x + i y ) ) $ can be represented as
\begin{equation*}
\tilde f( c^{ 1/4 }( x + i y ) ) = f( 1 ) + 6^{ -1/4 } [ \partial_{ \zeta } f_{ D_+ }( 1 )( x + i y ) + \partial_{ \bar \zeta } f_{ D_+ }( 1 ) ( x - i y ) ] c^{ 1/4 } + g( c^{ 1/4 }( x + i y ) ),
\end{equation*}
where $ \partial_{ \zeta } f_{ D_+ }( 1 ) = \lim\limits_{ \substack{ \zeta \in D_+ \\ \zeta \to 1 } } \partial_{ \zeta } f $, $ \partial_{ \bar \zeta } f_{ D_+ }( 1 ) = \lim\limits_{ \substack{ \zeta \in D_+ \\ \zeta \to 1 } } \partial_{ \bar \zeta } f $ and, in addition, $ g $ is a function that can be estimated
\begin{equation}
\label{g_est}
| g( c^{ 1/4 }( x + i y ) ) | \leqslant K | c^{ 1/4 }( x + i y ) |^{ 1 + \alpha }, \quad c^{ 1/4 }( x + i y ) \in \mathcal{U}_0
\end{equation}
with some constants $ \alpha > 0 $, $ K > 0 $. Using (\ref{f_particular_form}), we note that $ f( 1 ) = 0 $, $ \partial_{ \zeta } f_{ D_+ }( 1 ) = \partial_{ \bar \zeta } f_{ D_+ }( 1 ) \stackrel{ \definition }{ = } f'_{ D_+ }( 1 ) $ and thus
\begin{equation*}
\tilde f( c^{ 1/4 }( x + i y ) ) = 6^{ - 1/4 } f'_{ D_+ }( 1 ) x c^{ 1/4 } + g( c^{1/4}( x + i y ) ), \quad c^{ 1/4 }( x + i y ) \in \mathcal{U}_0.
\end{equation*}
Taking into account (\ref{der_in_zero}) we obtain
\begin{equation}
\mathcal{F}( c, x, y ) = \gamma f'_{ D_+ }( 1 ) x c^{ 1/4 } + \tilde g( c^{1/4}( x + i y ) ), \quad c^{ 1/4 }( x + i y ) \in \mathcal{U}_0,
\end{equation}
where $ \gamma = 6^{ - 3/4 } $ and $ \tilde g $ is a function satisfying an estimate similar to (\ref{g_est}).

It follows then that the functions $ F_{ \pm }( c ) $ behave asymptotically as
\begin{equation*}
F_{ \pm }( c ) = \gamma f'_{ D_+ }( 1 ) J_{ \Delta_+ }^{ \pm } ( \pm c )^{ 1/4 } + R( c ), \text{ when } c \to 0,
\end{equation*}
where
\begin{equation*}
J_{ \Delta_+ }^+ = \hspace{-0.2cm} \int\limits_{ \gamma_+ \cap \Delta_+ } \hspace{-0.2cm} x d \omega_S, \quad J_{ \Delta_+ }^- = \hspace{-0.2cm} \int\limits_{ \gamma_- \cap \Delta_+ } \hspace{-0.2cm} x d \omega_S
\end{equation*}
and $ R( c ) $ denotes the remainder.

The integrals $ J_{ \Delta_+ }^{ \pm } $ converge because the set $ \{ S( x, y ) = \pm 1 \} $ represents a combination of curves which do not pass through zero and converge either to the lines $ | y | = | x | $ or to the coordinate axes with velocities $ | y | = \frac{ 1 }{ | x |^3 } $ and $ | x | = \frac{ 1 }{ | y |^3 } $ correspondingly.

The remainder $ R( c ) $ behaves asymptotically as $ o\left( c^{ 1/4 } \right) $ because we can estimate
\begin{equation*}
| R( c ) | \leqslant \tilde K c^{ 1/4 ( 1 + \alpha ) } \hspace{ -0.5cm } \int\limits_{ ( \gamma_+ \cup \gamma_- )  \cap \Delta_+ } \hspace{ -0.5cm } | x + i y |^{ 1 + \alpha } | d \omega_S |.
\end{equation*}
and the integral converges due to the properties of the set $ \{ S( x, y )  = \pm 1 \} $ explained above.

Thus $ I_{ + }^{ + }( t ) $ behaves asymptotically as (see \cite[Chapter III, \S 1]{Fe})
\begin{equation*}
I_{ + }^{ + }( t ) = \frac{ \gamma }{ 3^{ 3/4 } } f'_{ D_+ }( 1 ) \Gamma\left( \frac{ 3 }{ 4 } \right) \left[ J_{ \Delta_+ }^{ + } \exp\left( \frac{ i \pi 3 }{ 8 } \right) + J_{ \Delta_+ }^{ - } \exp\left( -\frac{ i \pi 3 }{ 8 } \right) \right] \frac{ 1 }{ t^{ 3/4 } } + o\left( \frac{ 1 }{ t^{ 3/4 } } \right),
\end{equation*}
where $ \Gamma $ is the Gamma function.

Let us perform the same procedure for $ I_+^-( t ) $. In this case we will define $ P( \zeta ) $ as $ P( \zeta ) = - 9 \zeta - \frac{ 9 }{ \zeta } + \zeta^3 + \frac{ 1 }{ \zeta^3 } - 16 $ and obtain
\begin{equation*}
P( \zeta ) = \rho( \zeta ) ( \zeta + 1 )^4, \quad \rho( \zeta ) = \frac{ \zeta^2 - 4 \zeta + 1 }{ \zeta^3 }.
\end{equation*}
Since $ \rho( -1 ) = -6 $, we will define the following transformation $ \zeta \to \eta $ in the neighborhood of $ \zeta = -1 $: $ \eta = ( -\rho( \zeta ) )^{ 1/4 } ( \zeta + 1 ) $ (note that $ \left. \frac{ \partial \eta }{ \partial \zeta } \right|_{ \zeta = -1 } = \sqrt[ 4 ]{ 6 } $). Then $ S( \zeta ) $ can be represented $ S( \zeta ) = - \eta^4 + \bar \eta^4 $ and integral $ I_+^-( t ) $ becomes
\begin{equation*}
I_+^-( t ) = \iint\limits_{ \Delta_- } \tilde f( x + i y ) \tilde \psi_{ - 1 }( x + i y ) \exp( - 3 i t x y ( x^2 - y^2 ) ) | \partial_{ \eta } \varphi( x + i y ) |^2 d x d y,
\end{equation*}
where $ \Delta_- = \{ ( x, y ) \in \mathbb{R}^2 \colon x > 0 \} $ and the functions $ \tilde f $, $ \tilde \psi_{ -1 } $, $ \varphi $ are defined similarly to the case of $ I_+^+( t ) $. The integral $ I_+^-( t ) $ can also be written
\begin{equation*}
I_+^-( t ) = \int\limits_{ -\infty }^{ +\infty } dc \exp( - 3 i t c ) \hspace{-0.2cm} \int\limits_{ \gamma_c \cap \Delta_- } \hspace{-0.2cm} \tilde f( x + i y ) \tilde \psi_1( x + i y ) | \partial_{ \eta } \varphi( x + i y ) |^2 d \omega_S,
\end{equation*}
where $ \gamma_c $ and $ d \omega_S $ are the same as for the case of $ I_+^+( t ) $. Performing further the same procedure as for the case of $ I_+^+( t ) $ and taking into account that $ J_{ \Delta_+ }^{ + } = - J_{ \Delta_- }^{ + } $, $ J_{ \Delta_+ }^{ - } = - J_{ \Delta_- }^{ - } $, we obtain the following asymptotic expansion for $ I_+^-( t ) $:
\begin{equation*}
I_{ + }^{ - }( t ) = - \frac{ \gamma }{ 3^{ 3/4 } } f'_{ D_+ }( -1 ) \Gamma\left( \frac{ 3 }{ 4 } \right) \left[ J_{ \Delta_+ }^{ + } \exp\left( -\frac{ i \pi 3 }{ 8 } \right) + J_{ \Delta_+ }^{ - } \exp\left( \frac{ i \pi 3 }{ 8 } \right) \right] \frac{ 1 }{ t^{ 3/4 } } + o\left( \frac{ 1 }{ t^{ 3/4 } } \right).
\end{equation*}

Considering the case of $ I_-( t ) $ we note that in order to get an asymptotic representation for $ I_-^+( t ) $ and $ I_-^-( t ) $ we need to replace $ D_+ \to D_- $, $ \Delta_+ \to \Delta_- $, $ \Delta_- \to \Delta_+ $ in the formulas for $ I_+^+( t ) $ and $ I_+^-( t ) $ correspondingly. Taking into account that $ f'_{ D_+ }( 1 ) = - f'_{ D_- }( 1 ) $, $ f'_{ D_+ }( -1 ) = - f'_{ D_- }( -1 ) $, we obtain
\begin{equation*}
I( t ) = \frac{ C }{ t^{ 3/4 } } + o\left( \frac{ 1 }{ t^{ 3/4 } } \right),
\end{equation*}
where
\begin{multline}
\label{c_formula}
C = \frac{ 2 \gamma }{ 3^{ 3/4 } } \Gamma\left( \frac{ 3 }{ 4 } \right) \biggl( f'_{ D_+ }( 1 ) \left( J_{ \Delta_+ }^{ + } \exp\left( \frac{ i \pi 3 }{ 8 } \right) + J_{ \Delta_+ }^{ - } \exp\left( -\frac{ i \pi 3 }{ 8 } \right) \right) - \\
f'_{ D_+ }( -1 ) \left( J_{ \Delta_+ }^{ + } \exp\left( -\frac{ i \pi 3 }{ 8 } \right) + J_{ \Delta_+ }^{ - } \exp\left( \frac{ i \pi 3 }{ 8 } \right) \right) \biggr).
\end{multline}

Thus we have shown that the linear approximation of the solution $ v( z, t ) $ of (\ref{NV}), when $ z = -18 t $, behaves asymptotically as $ \frac{ C }{ t^{ 3/4 } } $ when $ t \to \infty $.

Note that on the set $ \gamma_+ \cup \gamma_- $ the differential form $ d \omega_S $ is positive. Thus $ J_{ \Delta_+ }^{ \pm } $ are some negative constants, and expressions $ J_{ \Delta_+ }^{ + } \exp\left( \frac{ i \pi 3 }{ 8 } \right) + J_{ \Delta_+ }^{ - } \exp\left( -\frac{ i \pi 3 }{ 8 } \right) $, $ J_{ \Delta_+ }^{ + } \exp\left( -\frac{ i \pi 3 }{ 8 } \right) + J_{ \Delta_+ }^{ - } \exp\left( \frac{ i \pi 3 }{ 8 } \right) $ do not vanish. On the other hand, from (\ref{b_sym}) and (\ref{f_particular_form}) it follows that $ f'_{ D_+ }( 1 ) / f'_{ D_+ }( -1 ) = b( 1 ) / \overline{ b( 1 ) } $. Thus, in the general case the constant $ C $ from (\ref{c_formula}) is nonzero. We have proved the optimality of the estimate (\ref{main_estimate}) in the linear approximation.

\subsection{Optimality of the estimate for the non-linear case}
Now we show that for certain initial values $ v( z, 0 ) $ the corresponding solution $ v( z, t ) $ of (\ref{NV}) behaves asymptotically as $ \frac{ c }{ t^{ 3/4 } } $ along the line $ z = -18 t $ for some $ c \neq 0 $. Let us show that the integral (\ref{alpha1_integral}) with $ f \equiv 1 $ and $ z = -18 t $ behaves as $ \frac{ \const }{ t^{ 3/4 } } $.

We can represent $ \alpha_1( -18 t, t ) $ in the form (\ref{i_particular_form}) with $ f( \zeta, t ) = r( \zeta, t ) \rho( \zeta, t ) $, where
\begin{equation*}
\rho( \zeta, t ) = - \frac{ 1 }{ \pi^2 } \iint\limits_{ \mathbb{C} } \frac{ \partial_{ z } r( \eta, z, t ) |_{ z = -18t } }{ \eta - \zeta } d \Re \eta d \Re \zeta.
\end{equation*}
In other terms,
\begin{equation*}
f( \zeta ) = f( \zeta, t ) = \frac{ \sgn( 1 - \zeta \bar \zeta ) }{ \bar \zeta } b( \zeta ) \partial_{ \bar \zeta }^{ -1 } \left[ \frac{ \pi | 1 - \zeta \bar \zeta | }{ 2 | \zeta |^2 } b( \zeta ) \exp( t S( \zeta ) ) \right].
\end{equation*}

We proceed following the scheme of estimate for $ I( t ) $ until formula (\ref{g_est}). Then we represent
\begin{equation*}
\tilde \psi_1( c^{ 1/4 }( x + i y ) ) | \partial_{ \eta } \varphi( c^{ 1/4 }( x + i y ) ) |^2 = \frac{ 1 }{ \sqrt{6} } + k c^{ 1/4 }( x + i y ) + h( c^{ 1/4 }( x + i y ) ),
\end{equation*}
where $ k $ is some coefficient and $ h( c^{ 1/4 }( x + i y ) ) $ satisfies an estimate of type (\ref{g_est}). Consequently, for $ \mathcal{F}( c, x, y ) $ we can write
\begin{equation*}
\mathcal{F}( c, x, y ) = f( 1 )( 6^{ -1/2 } + k c^{ 1/4 }( x + i y ) ) + 6^{ -1/4 } [ \partial_{ \zeta } f_{ D_+ }( 1 )( x + i y ) + \partial_{ \bar \zeta } f_{ D_+ }( 1 ) ( x - i y ) ] c^{ 1/4 } + \tilde g( c^{ 1/4 }( x + i y ) ),
\end{equation*}
where $ \tilde g( c^{ 1/4 }( x + i y ) ) $ satisfies an estimate of type (\ref{g_est}). This allows us to obtain in the end
\begin{equation}
\label{alpha_1_estimate}
\alpha_1( -18 t, t ) = \frac{ l_1( f( \pm 1 ) ) }{ t^{ 1/2 } } \left( 1 + \frac{ \const }{ t^{ 1/4 } } \right) + \frac{ l_2( f_{ \zeta }( \pm 1 ), f_{ \bar \zeta }(\pm 1 ) ) }{ t^{ 3/4 } } + o\left( \frac{ 1 }{ t^{ 3/4 } } \right), \quad t \to \infty,
\end{equation}
where $ l_1( f( \pm 1 ) ) $ is a linear combination of the limit values of $ f $ as $ \zeta $ tends to $ 1 $ and $ -1 $ from inside and outside of the unit circle, and $ l_2( f_{ \zeta }( \pm 1 ), f_{ \bar \zeta }(\pm 1 ) ) $ is a linear combination of the limit values of $ f_{ \zeta } $ and $ f_{ \bar \zeta } $ as $ \zeta $ tends to $ 1 $ and $ -1 $ from inside and outside of the unit circle.

Now let us consider the potential $ v_{ \theta } $ corresponding to the scattering data $ \theta b $, where $ \theta \in \mathbb{R} $ is some small parameter. In a way similar to which (\ref{alpha_1_estimate}) was obtained it can be shown that
\begin{equation*}
| f( \pm 1, t ) | \leqslant \frac{ c_1 \theta^2 }{ t^{ 1/2 } }, \quad | f_{ \zeta }( \pm 1, t ) | \leqslant c_2 \theta^2, \quad | f_{ \bar \zeta }( \pm 1, t ) | \leqslant c_2 \theta^2
\end{equation*}
for sufficiently large values of $ t $, where $ c_1 $, $ c_2 $, $ c_3 $ are some constants independent of $ t $ and $ \theta $.
When $ \theta \to 0 $ and $ t \to \infty $, the linear approximation of $ v_{ \theta } $ behaves as $ O\left( \frac{ \theta }{ t^{ 3/4 } } \right) $, while the expression $ \alpha_1( -18t, t ) $ behaves as $ O\left( \frac{ \theta^2 }{ t^{ 3/4 } } \right) $ (it can be shown that the member $ o\left( \frac{ 1 }{ t^{ 3/4 } } \right) $ in (\ref{alpha_1_estimate}) depends quadratically on $ \theta $).

Finally, from (\ref{nu_representation}) and Lemma \ref{estimates_lemma} it follows that for $ \theta $ small enough and for $ z = -18 t $,
\begin{equation*}
v_{ \theta }( z, t ) = \frac{ C_{ \theta } }{ t^{ 3/4 } } + o\left( \frac{ 1 }{ t^{ 3/4 } } \right), \quad t \to \infty,
\end{equation*}
where $ C_{ \theta } $ is some nonzero constant. Thus we have shown that the estimate (\ref{main_estimate}) is optimal.

\section{Proofs of Lemma \ref{lin_estimate_lemma} and Lemma \ref{estimates_lemma}}
\label{proof_section}

\begin{proof}[Proof of Lemma \ref{lin_estimate_lemma}]
The proof follows the scheme described in Section \ref{lin_section} and is carried out separately for four cases depending on the values of the parameter $ u $. In all the reasonings that follow we denote by $ D_{ \varepsilon } $ the union of disks with the radius $ \varepsilon $ centered in the stationary points of $ S( u, \zeta ) $ and we denote by $ T $ the unit circle on the complex plane:
\begin{equation}
\label{t_definition2}
T = \{ \lambda \in \mathbb{C} \colon | \lambda | = 1 \};
\end{equation}
in addition, $ \const $ will denote an independent constant and $ \const( f ) $ will denote a constant depending only on function $ f $.

Case 1. $ u \in \mathbb{U} $

In this case all the stationary points belong to $ T $ and due to assumptions (\ref{f_assumptions}) and (\ref{zero_on_the_boundary}) of Lemma \ref{lin_estimate_lemma} we can estimate
\begin{equation}
\label{f_estimate}
| f( \zeta ) | \leqslant \const( f ) \varepsilon \text{ for } \zeta \in D_{ \varepsilon }.
\end{equation}
Now we estimate the integral $ I_{int} $ (as in (\ref{int_sum})) as follows
\begin{equation*}
| I_{ int } | = \left| \iint\limits_{ D_{ \varepsilon } } f( \zeta ) \exp( t S( u, \zeta ) ) d \Re \zeta d \Im \zeta \right| \leqslant \const( f ) \cdot \varepsilon \iint\limits_{ D_{ \varepsilon } } d \Re \zeta d \Im \zeta \leqslant \const( f ) \varepsilon^3.
\end{equation*}

The estimate for $ I_{ ext } $ (as in (\ref{int_sum})) is proved as follows.

We note that the function $ S'_{ \zeta }( u, \zeta ) $ can be estimated as
\begin{equation}
\label{der_estimate1}
\begin{aligned}
& | S'_{ \zeta }( u, \zeta ) | \geqslant \const \frac{ \varepsilon_0^3 }{ | \zeta |^4 } \quad \text{for} \: \zeta \in \mathbb{C} \backslash D_{ \varepsilon_0 }, \quad \text{and} \\
& | S'_{ \zeta }( u, \zeta ) | \geqslant \const \frac{ \rho^3 }{ | \zeta |^4 } \quad \text{for} \: \zeta \in \partial D_{ \rho }, \quad \varepsilon \leqslant \rho \leqslant \varepsilon_0.
\end{aligned}
\end{equation}

Similarly, we can estimate
\begin{equation}
\label{der_estimate2}
\begin{aligned}
& \left| \frac{ S''_{ \zeta \zeta }( u, \zeta ) }{ ( S'_{ \zeta }( u, \zeta ) )^2 } \right| \leqslant \const \frac{ | \zeta |^4 }{ \varepsilon_0^4 } \quad \text{for} \: \zeta \in \mathbb{C} \backslash D_{ \varepsilon_0 }, \quad \text{and} \\
& \left| \frac{ S''_{ \zeta \zeta }( u, \zeta ) }{ ( S'_{ \zeta }( u, \zeta ) )^2 } \right| \leqslant \const \frac{ | \zeta |^4 }{ \rho_0^4 } \quad \text{for} \: \zeta \in \partial D_{ \rho }, \quad \varepsilon \leqslant \rho \leqslant \varepsilon_0.
\end{aligned}
\end{equation}

Thus we obtain the following estimate for $ I_1 $ from (\ref{ext_by_parts})
\begin{equation*}
| I_1 | \leqslant \frac{ 1 }{ 2 } \int\limits_{ \partial D_{ \varepsilon } } \frac{ | f( \zeta ) | }{ | S'_{ \zeta }( u, \zeta ) | } | d \bar \zeta | \leqslant \const \frac{ \varepsilon }{ \varepsilon^3 } \int\limits_{ \partial D_{ \varepsilon } } | \zeta |^4 | d \bar \zeta | \leqslant \const( f ) \frac{ \varepsilon }{ \varepsilon^2 } ( 1 + \varepsilon )^4 \leqslant \frac{ \const( f ) }{ \varepsilon }.
\end{equation*}

Due to assumption (\ref{zero_on_the_boundary}) of Lemma \ref{lin_estimate_lemma} the integral $ I_2 $ from (\ref{ext_by_parts}) is equivalent to zero.

When estimating $ I_3 $ and $ I_4 $ from (\ref{ext_by_parts}) we fix some independent $ \varepsilon_0 > 0 $ and integrate separately over $ D_{ \varepsilon_0 } \backslash D_{ \varepsilon } $ and $ \mathbb{C} \backslash D_{ \varepsilon_0 } $:
\begin{multline*}
| I_3 | \leqslant \iint\limits_{ D_{ \varepsilon_0 } \backslash D_{ \varepsilon } } \left| \frac{ f'_{ \zeta }( \zeta ) \exp( t S( u, \zeta ) ) }{ S'_{ \zeta }( u, \zeta ) } \right| d \Re \zeta d \Im \zeta + \iint\limits_{ \mathbb{C} \backslash D_{ \varepsilon_0 } } \left| \frac{ f'_{ \zeta }( \zeta ) \exp( t S( u, \zeta ) ) }{ S'_{ \zeta }( u, \zeta ) } \right| d \Re \zeta d \Im \zeta \leqslant \\
\leqslant \const( f ) \int\limits_{ \varepsilon }^{ \varepsilon_0 } \frac{ \rho }{ \rho^3 } d \rho + \const \iint\limits_{ \mathbb{C} \backslash D_{ \varepsilon_0 } } | f'_{ \zeta }( \zeta ) || \zeta^4 | d \Re \zeta d \Im \zeta \leqslant \frac{ \const( f ) }{ \varepsilon },
\end{multline*}
\begin{multline*}
| I_4 | \leqslant \iint\limits_{ D_{ \varepsilon_0 } \backslash D_{ \varepsilon } } \left| \frac{ f( \zeta ) \exp( t S( u, \zeta ) ) S''_{ \zeta \zeta }( u, \zeta ) }{ ( S'_{ \zeta }( u, \zeta ) )^2 } \right| d \Re \zeta d \Im \zeta  + \\
+ \iint\limits_{ \mathbb{C} \backslash D_{ \varepsilon_0 } } \left| \frac{ f( \zeta ) \exp( t S( u, \zeta ) ) S''_{ \zeta \zeta }( u,\zeta ) }{ ( S'_{ \zeta }( u, \zeta ) )^2 } \right| d \Re \zeta d \Im \zeta \leqslant \\
\leqslant \const( f ) \int\limits_{ \varepsilon }^{ \varepsilon_0 } \frac{ \rho^2 }{ \rho^{4} } d \rho + \const \iint\limits_{ \mathbb{C} \backslash D_{ \varepsilon_0 } } | f( \zeta ) || \zeta^3 | d \Re \zeta d \Im \zeta \leqslant \frac{ \const( f ) }{ \varepsilon }.
\end{multline*}

Setting finally $ \varepsilon = \frac{ 1 }{ ( 1 + | t | )^{ 1 / 4 } } $ yields
\begin{equation*}
I( t, u ) \leqslant \frac{ \const( f ) }{ ( 1 + | t | )^{ 3/4 } }
\end{equation*}
uniformly on $ u \in \mathbb{U} $.

Case 2. $ u \in \mathbb{C} \backslash \mathbb{U} $ and $ \omega $ from (\ref{introduce_omega}) satisfies $ \omega_0 < \frac{ \omega }{ 1 + \omega } < 1 - \omega_1 $ for some fixed independent positive constants $ \omega_0 $ and $ \omega_1 $ (i.e. the roots $ \zeta_0 $,  $ \zeta_2 $ from (\ref{introduce_omega}) are separated from $ T $, defined by (\ref{t_definition2}), and the root $ \zeta_2 $ is separated from the origin)

In this case the we can estimate
\begin{align*}
& | S'_{ \zeta }( u, \zeta ) | \geqslant \frac{ \const \rho }{ | \zeta |^4 } \text{ for } \zeta \in \partial D_{ \rho }, \\
& \left| \frac{ S_{ \zeta \zeta }'' ( u, \zeta ) }{ ( S_{ \zeta }'( u, \zeta ) )^2 } \right| \leqslant \frac{ \const | \zeta |^4 }{ \rho^2 } \text{ for } \zeta \in \partial D_{ \rho }.
\end{align*}
Using these estimates and proceeding as in case 1, we obtain
\begin{equation*}
| I_{ int } | \leqslant \const( f ) \varepsilon^2, \quad | I_1 | \leqslant \const( f ), \quad I_2 \equiv 0, \quad | I_3 | \leqslant \const( f ), \quad | I_4 | \leqslant \const( f ) \ln \frac{ 1 }{ \varepsilon }.
\end{equation*}
Setting $ \varepsilon = \frac{ 1 }{ 1 + | t | } $, we obtain that
\begin{equation*}
I( t, u ) \leqslant \const( f ) \frac{ \ln( 3 + | t | ) }{ 1 + | t | }
\end{equation*}
uniformly for the considered values of the parameter $ u $.

Case 3. $ u \in \mathbb{C} \backslash \mathbb{U} $ and $ \frac{ \omega }{ 1 + \omega } < \omega_0 $ (i.e. the roots $ \zeta_0 $ and $ \zeta_2 $ from (\ref{introduce_omega}) lie in some neighborhood of $ T $ from (\ref{t_definition2}))

\begin{lemma}
\label{small_lemma}
For any $ t \geqslant t_0 $ with some fixed $ t_0 > 0 $ and any $ \omega > 0 $ one of the following conditions holds
\begin{itemize}
\item[(a)] $ 0 < \omega < \frac{ 2 }{ ( 1 + | t | )^{ 1/4 } } $;
\item[(b)] $ \omega > \frac{ 1 }{ ( 1 + | t | )^{ 1/8 } } $;
\item[(c)] $ \exists n \colon \frac{ 1 }{ ( 1 + | t | )^{ \gamma_{ n + 1 } / ( 2 + 2 \gamma_{ n + 1 } ) } } < \omega < \frac{ 2 }{ ( 1 + | t | )^{ \gamma_{ n } / ( 2 + 2 \gamma_{ n + 1 } ) } } $, where $ \gamma_{ n + 1 } = \frac{ 2 }{ 3 } \gamma_{ n } + \frac{ 1 }{ 3 } $, $ \gamma_1 = \frac{ 1 }{ 3 } $.
\end{itemize}
\end{lemma}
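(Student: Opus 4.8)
The plan is to restate the three alternatives as a covering of the ray $(0,\infty)$ by open intervals in the variable $\omega$ and to verify that these intervals leave no gap. Throughout I abbreviate $s = 1 + |t|$, so that the hypothesis $t \geq t_0 > 0$ guarantees $s > 1$, which is all that will be used; consequently $t_0$ may be taken to be any positive constant.

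First I would solve the recurrence. Writing $\gamma_{n+1} - 1 = \tfrac{2}{3}(\gamma_n - 1)$ together with $\gamma_1 = \tfrac13$ gives the closed form $\gamma_n = 1 - (2/3)^n$; in particular $(\gamma_n)$ is strictly increasing with $\gamma_n \uparrow 1$. I then introduce the two exponent sequences
$$a_n = \frac{\gamma_n}{2 + 2\gamma_{n+1}}, \qquad b_n = \frac{\gamma_{n+1}}{2 + 2\gamma_{n+1}},$$
so that condition (c) for the index $n$ is exactly $s^{-b_n} < \omega < 2\,s^{-a_n}$ (call this interval $C_n$), condition (a) is $0 < \omega < 2\,s^{-1/4}$, and condition (b) is $\omega > s^{-1/8}$. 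From the closed form I record three elementary facts: $a_n < b_n$ (so each $C_n$ is nonempty); both $a_n$ and $b_n$ are increasing with common limit $\tfrac14$ (so the intervals $C_n$ march monotonically downward toward $s^{-1/4}$, with the largest upper endpoint occurring at $n=1$, where $a_1 = 3/28$); and the crucial overlap inequality $a_{n+1} < b_n$, which is immediate since $a_{n+1}$ and $b_n$ share the numerator $\gamma_{n+1}$ while $a_{n+1}$ has the strictly larger denominator $2+2\gamma_{n+2}$.

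The overlap inequality yields $s^{-b_{n+1}} < s^{-b_n} < 2\,s^{-a_{n+1}}$ for $s>1$, i.e. the lower endpoint of $C_n$ lies strictly inside $C_{n+1}$; combined with the downward monotonicity of the endpoints this gives, by a one-line induction, $\bigcup_{n=1}^{N} C_n = (s^{-b_N},\,2\,s^{-a_1})$, hence in the limit
$$\bigcup_{n\geq 1} C_n = \bigl(s^{-1/4},\, 2\,s^{-3/28}\bigr),$$
using $b_N \uparrow \tfrac14$ and $a_1 = 3/28$. It then remains only to handle the two junctions, which I would phrase as follows: conditions (a) and (b) already cover everything outside the band $[\,2\,s^{-1/4},\, s^{-1/8}\,]$, so it suffices that the displayed union contain this band. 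The left endpoint satisfies $2\,s^{-1/4} > s^{-1/4}$, and since $\tfrac18 = \tfrac{3.5}{28} > \tfrac{3}{28}$ we have $s^{-1/8} < s^{-3/28} < 2\,s^{-3/28}$ for $s>1$; thus the band is contained in $(s^{-1/4}, 2\,s^{-3/28})$. (When $s < 2^{8}$ the band is empty and (a),(b) alone suffice; when $s \geq 2^{8}$ it is a genuine interval covered by the $C_n$.) This proves that $(0,\infty)$ is covered, which is the assertion.

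I expect no serious obstacle once the closed form $\gamma_n = 1-(2/3)^n$ is established: the whole lemma then reduces to the single structural inequality $a_{n+1} < b_n$ together with the limits $a_n, b_n \to \tfrac14$ and the value $a_1 = 3/28$. The only points demanding care are bookkeeping ones — confirming that the factors of $2$ in (a) and in the upper bound of (c) are precisely what close the gaps at the junction values $\omega = s^{-1/4}$ and $\omega = s^{-1/8}$, and noting that the band being covered is legitimately allowed to be empty for small $s$, so that the lemma holds for every $t \geq t_0$ with $t_0$ an arbitrary positive constant.
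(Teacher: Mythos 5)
Your proof is correct and takes essentially the same route as the paper: the paper's own argument consists of recording exactly your three ingredients, namely $\frac{\gamma_{n+1}}{2+2\gamma_{n+1}} \to \frac14$ (your common limit of the endpoints), $\frac{\gamma_n}{2+2\gamma_{n+1}} < \frac{\gamma_n}{2+2\gamma_n}$ (your overlap inequality $a_{n+1} < b_n$, up to an index shift), and $\frac{\gamma_1}{2+2\gamma_2} < \frac18$ (your junction estimate $a_1 = \frac{3}{28} < \frac18$), and then asserting that the intervals cover $(0,+\infty)$. You merely flesh out, via the closed form $\gamma_n = 1-(2/3)^n$ and the explicit computation of $\bigcup_n C_n$, the verification the paper leaves implicit.
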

\begin{proof}
We note that
\begin{align*}
& \frac{ \gamma_{ n + 1 } }{ 2 + 2 \gamma_{ n + 1 } } \to \frac{ 1 }{ 4 }, \quad n \to \infty; \\
& \frac{ \gamma_n }{ 2 + 2 \gamma_{ n + 1 } } < \frac{ \gamma_n }{ 2 + 2 \gamma_n }; \\
& \frac{ \gamma_1 }{ 2 + 2 \gamma_2 } < \frac{ 1 }{ 8 }.
\end{align*}
Thus the intervals from the cases (a), (b), (c) $ \forall n \in \mathbb{N} $ cover the whole range $ 0 < \omega < +\infty $.
\end{proof}

We will prove the result separately for three different cases depending on the value of parameter $ \omega $
\begin{itemize}
\item[(a)] $ 0 < \omega < 2 \varepsilon = \frac{ 2 }{ ( 1 + | t | )^{ 1/4 } } $

In this case estimates (\ref{f_estimate}), (\ref{der_estimate1}), (\ref{der_estimate2}) hold and so the reasoning of the case 1 can be carried out to obtain that \begin{equation*}
I( t, u ) \leqslant \frac{ \const( f ) }{ ( 1 + | t | )^{ 3/4 } }
\end{equation*}
uniformly for the considered values of the parameter $ u $ satisfying
\begin{equation}
\label{left_range}
0 < \omega < \frac{ 2 }{ ( 1 + | t | )^{ 1/4 } }.
\end{equation}

\item[(b)] $ \omega > \varepsilon^{ 1/3 } = \frac{ 1 }{ ( 1 + | t | )^{ 1/8 } } $

In this case we estimate $ | I_{ int } | \leqslant \const( f ) \varepsilon^2 $.

Further, we note that the derivative of the phase is estimated as
\begin{equation}
\label{s_der_estimate}
| S'_{ \zeta }( u, \zeta ) | \geqslant \frac{ \const \, \varepsilon \omega^2 }{ | \zeta |^4 } \text{ for } \zeta \in \partial D_{ \varepsilon }.
\end{equation}
Thus for $ I_1 $ we obtain $ | I_1 | \leqslant \const( f ) \frac{ 1 }{ \varepsilon^{ 2/3 } } $.

In order to estimate the integral $ I_3 $ we use the following estimate of the derivative $ S'_{ \zeta } $ for $ \zeta \in \partial D_{ \rho } $ when $ \varepsilon \leqslant \rho \leqslant \varepsilon_0 $:
\begin{equation}
\label{s_der_multi_case}
\begin{cases}
& | S'_{ \zeta }( u, \zeta ) | \geqslant \frac{ \const \rho \omega^2 }{ | \zeta |^4 }, \text{ if } \rho < \omega, \\
& | S'_{ \zeta }( u, \zeta ) | \geqslant \frac{ \const \rho^3 }{ | \zeta |^4 }, \text{ if } \rho > \omega.
\end{cases}
\end{equation}
It allows to derive $ | I_3 | \leqslant \const( f ) \frac{ 1 }{ \varepsilon^{ 2/3 } } $.

Finally, we proceed to the study of the integral $ I_4 $. We use the following estimates
\begin{equation*}
\left| \frac{ S''_{ \zeta \zeta }( u, \zeta ) }{ ( S'_{ \zeta }( u, \zeta ) )^2 } \right| \leqslant \frac{ \const | \zeta |^4 }{ \rho^2 \omega^2 }
\end{equation*}
and
\begin{equation}
\label{f_gen_estimate}
\begin{cases}
& | f( \zeta ) | \leqslant \const( f ) \, \omega, \text{ if } \rho < \omega, \\
& | f( \zeta ) | \leqslant \const( f ) \, \rho, \text{ if } \rho > \omega.
\end{cases}
\end{equation}
After integration we obtain the estimate $ | I_4 | \leqslant \const( f ) \frac{ 1 }{ \varepsilon^{ 2/3 } } $.

Setting finally $ \varepsilon = \frac{ 1 }{ ( 1 + | t | )^{ 3/8 } } $, we obtain
\begin{equation*}
I( t, u ) \leqslant \frac{ \const( f ) }{ ( 1 + | t | )^{ 3/4 } }
\end{equation*}
uniformly for the considered values of the parameter $ u $ satisfying
\begin{equation}
\label{right_range}
\omega > \frac{ 1 }{ ( 1 + | t | )^{ 1/8 } }.
\end{equation}

\item[(c)] $ \varepsilon^{ \gamma_{ n + 1 } } < \omega < 2 \varepsilon^{ \gamma_{ n } } $, where $ \varepsilon = \frac{ 1 }{ ( 1 + | t | )^{ 1 / ( 2 + 2 \gamma_{ n + 1 } ) } } $ and $ \gamma_{ n + 1 } = \frac{ 2 }{ 3 } \gamma_n + \frac{ 1 }{ 3 } $, $ \gamma_1 = \frac{ 1 }{ 3 } $ (note that $ \gamma_n \to 1 $)

We proceed similarly to the case (b). Evidently, $ I_{ int } $ can be estimated $ | I_{ int } | \leqslant \const( f ) \varepsilon^{ 2 + \gamma_n } $. Employing the estimate (\ref{s_der_estimate}) we obtain $ | I_1 | \leqslant \const( f ) \frac{ \varepsilon^{ \gamma_n } }{ \varepsilon^{ 2 \gamma_{ n + 1 } } } $.

Using (\ref{s_der_multi_case}) in order to estimate $ I_3 $ we obtain $ | I_3 | \leqslant \const( f ) \frac{ 1 }{ \varepsilon^{ \gamma_{ n + 1 } } } $.

Finally, to estimate $ I_4 $ we use (\ref{f_gen_estimate}) and

\begin{equation*}
\left| \frac{ S''_{ \zeta \zeta } }{ ( S'_{ \zeta }( u, \zeta ) )^2 } \right| \leqslant \begin{cases}
& \frac{ \const | \zeta |^4 }{ \rho^2 \omega^2 }, \quad \rho < \omega, \\
& \frac{ \const | \zeta |^4 }{ \rho^3 \omega }, \quad \rho > \omega
\end{cases}
\end{equation*}
to obtain $ | I_4 | \leqslant \const( f ) \frac{ \varepsilon^{ \gamma_n } \ln( 1 / \varepsilon ) }{ \varepsilon^{ 2 \gamma_{ n + 1 } } } $.

Setting $ \varepsilon = \frac{ 1 }{ ( 1 + | t | )^{ 1 / ( 2 + 2 \gamma_{ n + 1 } ) } } $ yields
\begin{equation*}
| I( t, u ) | \leqslant \frac{ \const( f ) \ln( 3 + | t | ) ) }{ ( 1 + | t | )^{ 3/4 } }
\end{equation*}
uniformly for the considered values of the parameter $ u $ satisfying
\begin{equation}
\label{intermediate_range}
\frac{ 1 }{ ( 1 + | t | )^{ \gamma_{ n + 1 } / ( 2 + 2 \gamma_{ n + 1 } ) } } < \omega < \frac{ 2 }{ ( 1 + | t | )^{ \gamma_n / ( 2 + 2 \gamma_{ n + 1 } ) } }.
\end{equation}

\end{itemize}

Finally, from Lemma \ref{small_lemma} it follows that we have proved the required estimate uniformly on the values of parameter $ u \in \mathbb{C} \backslash \mathbb{U} $ such that $ \frac{ \omega }{ 1 + \omega } < \omega_0 $.

Case 4. $ u \in \mathbb{C} \backslash \mathbb{U} $ and $ \frac{ \omega }{ 1 + \omega } > 1 - \omega_1 $ (i.e. the roots $ \zeta_2 $, $ -\zeta_2 $ lie in the $ \omega_1 $--neighborhood of the origin)

This case is treated similarly to the previous one. We denote $ \tilde \omega = \frac{ 1 }{ 1 + \omega } $. Then we use estimates
\begin{equation*}
\begin{aligned}
& | f( \zeta ) | \leqslant c( f ) | \tilde \omega + \rho |, \\
& | S'_{ \zeta }( u, \zeta ) | \geqslant \frac{ \const \rho^2 }{ | \zeta |^4 } \text{ or } | S'_{ \zeta }( u, \zeta ) | \geqslant \frac{ \const \rho \tilde \omega }{ | \zeta |^4 }, \\
& \left| \frac{ S''_{ \zeta \zeta }( u, \zeta ) }{ ( S'_{ \zeta }( u, \zeta ) )^2 } \right| \leqslant \frac{ \const | \zeta |^4 }{ \rho^3 } \text{ or } \left| \frac{ S''_{ \zeta \zeta }( u, \zeta ) }{ ( S'_{ \zeta }( u, \zeta ) )^2 } \right| \leqslant \frac{ \const | \zeta |^4 }{ \rho^2 \tilde \omega },
\end{aligned}
\end{equation*}
which hold for $ \zeta \in D_{ \rho } $, to obtain the necessary estimates.
\end{proof}

\begin{proof}[Proof of Lemma \ref{estimates_lemma}]
\begin{enumerate}
\item The proof of inequality (\ref{beta1_estimate}) repeats the proof of Lemma \ref{lin_estimate_lemma}. The proof of inequality (\ref{beta2_estimate}) also follows the scheme of the proof of Lemma \ref{lin_estimate_lemma}. In this case we take $ D_{ \varepsilon } $ to be the union of disks of the radius $ \varepsilon $ with centers in the stationary points of $ S( u, \zeta ) $ and in the point $ \lambda $.

For the case when $ \lambda \not \in T $, where $ T $ is defined by (\ref{t_definition2}), an estimate weaker than (\ref{beta2_estimate}) can be obtained via a simplified reasoning. Indeed, $ I_{ int } $, as in (\ref{int_sum}), can be estimated $ | I_{ int } | \leqslant O\left( \frac{ \varepsilon }{ ( 1 + | t | )^{ \delta } } \right) $. Using estimates (\ref{der_estimate1}), (\ref{der_estimate2}) and
\begin{equation}
\label{lambda_estimate}
| \zeta - \lambda | \geqslant \rho \text{ for } \zeta \in \partial D_{ \rho }
\end{equation}
we obtain that $ | I_{ ext } | \leqslant O\left( \frac{ 1 }{ ( 1 + | t | )^{ 1 + \delta } \varepsilon^3 } \right) $. Setting $ \varepsilon = \frac{ 1 }{ ( 1 + | t | )^{ 1/4 } } $ we get the estimate (\ref{beta3_estimate}).

\item In order to obtain estimates (\ref{alpha1_estimate}), (\ref{alpha2_estimate}) we proceed according to the scheme outlined in Section \ref{lin_section}. In this case the integral $ I_2 $ does not annul. On the other hand, when the variable of integration belongs to $ T $, the estimate (\ref{beta2_estimate}) on the integrand of $ I_2 $ is stronger than the estimate (\ref{beta3_estimate}) for the general case. Thus we obtain for $ \alpha_1( z, t ) $
\begin{gather*}
| I_{ int } | \leqslant O\left( \frac{ \varepsilon^2 }{ ( 1 + | t | )^{ \delta + \frac{ 1 }{ 4 } } } \right), \quad | I_1 | \leqslant O\left( \frac{ 1 }{ ( 1 + | t | )^{ \delta + \frac{ 1 }{ 4 } } \, \varepsilon^2 } \right), \quad | I_2 | \leqslant O\left( \frac{ 1 }{ ( 1 + | t | )^{ \delta + \frac{ 1 }{ 4 } } \, \varepsilon^3 } \right), \\
| I_3 | \leqslant O\left( \frac{ 1 }{ ( 1 + | t | )^{ \delta } \, \varepsilon } \right), \quad | I_4 | \leqslant O\left( \frac{ 1 }{ ( 1 + | t | )^{ \delta + \frac{ 1 }{ 4 } } \, \varepsilon^2 } \right).
\end{gather*}
Setting $ \varepsilon = \frac{ 1 }{ ( 1 + | t | )^{ 1/4 } } $ yields the required estimate. The estimate (\ref{alpha2_estimate}) is obtained similarly.

\item We will give the scheme of the proof for estimate (\ref{gamma1_estimate}). The estimate (\ref{gamma2_estimate}) is obtained similarly.

We will prove (\ref{gamma1_estimate}) by induction. Suppose that (\ref{gamma1_estimate}) holds for all $ n = 1, 2, \ldots, N $. Then following the scheme of Section \ref{lin_section} and taking into account that $ \partial_{ \lambda } \overline{ ( A^n_{ z, t } \cdot f )( \lambda ) } = \overline{ ( A^{ n - 1 }_{ z, t } \cdot f )( \lambda ) } $, we obtain for $ n = N + 1 $:
\begin{gather*}
| I_{ int } | \leqslant O\left( \frac{ \varepsilon }{ ( 1 + | t | )^{ \delta + \frac{ 1 }{ 5 } \lceil \frac{ n - 1 }{ 2 } \rceil } } \right), \quad | I_1 | \leqslant O\left( \frac{ 1 }{ ( 1 + | t | )^{ \delta + \frac{ 1 }{ 5 } \lceil \frac{ n - 1 }{ 2 } \rceil } \varepsilon^3 } \right), \\
| I_2 | \leqslant O\left( \frac{ 1 }{ ( 1 + | t | )^{ \delta + \frac{ 1 }{ 5 } \lceil \frac{ n - 1 }{ 2 } \rceil } \varepsilon^4 } \right), \quad
| I_3 | \leqslant O\left( \frac{ 1 }{ ( 1 + | t | )^{ \delta + \frac{ 1 }{ 5 } \lceil \frac{ n - 2 }{ 2 } \rceil } \varepsilon^3 } \right), \\
| I_4 | \leqslant O\left( \frac{ 1 }{ ( 1 + | t | )^{ \delta + \frac{ 1 }{ 5 } \lceil \frac{ n - 1 }{ 2 } \rceil } \varepsilon^3 } \right).
\end{gather*}

Setting $ \varepsilon = \frac{ 1 }{ ( 1 + | t | )^{ 1/5 } } $ we obtain the required estimate.

\item We represent $ R_{ z, t }( \lambda ) $ as the sum of the following members
\begin{multline*}
R_{ z, t }( \lambda ) = B( A + A^2 + A^3 + \ldots ) \cdot 1 + A B ( A + A^2 + A^3 + \ldots ) \cdot 1 + \\
+ ( A + A^2 + A^3 + \ldots ) A B ( I + A + A^2 + \ldots ) \cdot 1 = R^1_{ z, t }( \lambda ) + R^2_{ z, t }( \lambda ) + R^3_{ z, t }( \lambda ).
\end{multline*}
The convergence of the series at sufficiently large times follows from the estimate (\ref{gamma2_estimate}).
Now let
\begin{equation*}
R^{ i }_{ z, t }( \lambda ) = \frac{ q_i( z, t ) }{ \lambda } + o\left( \frac{ 1 }{ | \lambda | } \right), \text{ as } \lambda \to \infty.
\end{equation*}
From (\ref{beta1_estimate}) and (\ref{gamma2_estimate}) it follows that $ | q_1( z, t ) | \leqslant \frac{ \hat q_1( c_f ) \ln( 3 + | t | ) }{ ( 1 + | t | )^{ 3/4 + 1/5 } } $. From (\ref{alpha1_estimate}) and (\ref{gamma2_estimate}) we obtain that $ | q_2( z, t ) | \leqslant \frac{ \hat q_2( c_f ) }{ ( 1 + | t | )^{ 3/4 + 1/5 } } $. Finally, from (\ref{alpha2_estimate}), (\ref{gamma1_estimate}) and (\ref{gamma2_estimate}) it follows that $ | q_3( z, t ) | \leqslant \frac{ \hat q_3( c_f ) }{ ( 1 + | t | )^{ 1/2 + 2/5 } } $. This yields the required estimate. \qedhere

\end{enumerate}
\end{proof}

\end{document}